\documentclass[reqno]{amsart}
\usepackage{amsmath, amssymb, amsthm, epsfig, float, enumitem}
\usepackage{hyperref, latexsym}
\usepackage{url}
\usepackage[mathscr]{euscript}

\usepackage{color}
\usepackage{fullpage} 
\usepackage{setspace}
\usepackage{mathtools}

\usepackage{tikz}
\usetikzlibrary{arrows}

\onehalfspacing

\def\today{\ifcase\month\or
  January\or February\or March\or April\or May\or June\or=
  July\or August\or September\or October\or November\or December\fi
  \space\number\day, \number\year}

 \newtheorem{theorem}[equation]{Theorem}
  
 \newtheorem{lemma}[equation]{Lemma}
 \newtheorem{proposition}[equation]{Proposition}
 \newtheorem{corollary}[equation]{Corollary}
 \theoremstyle{definition}
 
 \newtheorem{example}[equation]{Example}
 \theoremstyle{remark}

\newtheorem{question}{Question}

 \newcommand{\R}{\mathbb{R}}
  
 \newcommand{\N}{\mathbb{N}}

\newcommand{\intav}[1]{\mathchoice {\mathop{\vrule width 6pt height 3 pt depth  -2.5pt
\kern -8pt \intop}\nolimits_{\kern -6pt#1}} {\mathop{\vrule width
5pt height 3  pt depth -2.6pt \kern -6pt \intop}\nolimits_{#1}}
{\mathop{\vrule width 5pt height 3 pt depth -2.6pt \kern -6pt
\intop}\nolimits_{#1}} {\mathop{\vrule width 5pt height 3 pt depth
-2.6pt \kern -6pt \intop}\nolimits_{#1}}}

\begin{document}

\title[The maximal function of the Devil's staircase is absolutely continuous]{The maximal function of the Devil's staircase \\ is absolutely continuous}
\author[Gonz\'{a}lez-Riquelme and Kosz]{Cristian Gonz\'{a}lez-Riquelme and Dariusz Kosz}
\subjclass[2010]{26A45, 42B25, 39A12, 46E35, 46E39, 05C12.}
\keywords{Maximal operator, continuity, bounded variation.}
\address{ICTP -- The Abdus Salam International Centre for Theoretical Physics,
Strada Costiera 11,
I - 34151 Trieste, Italy}
\email{cgonzale@ictp.it}
\address{BCAM -- Basque Center for Applied Mathematics, 48009 Bilbao, Spain
	\& Wroc\l aw University of Science and Technology, 50-370
	Wroc\l aw, Poland. 
	}
\email{dariusz.kosz@pwr.edu.pl}

\allowdisplaybreaks
\numberwithin{equation}{section}

\maketitle

\begin{abstract}
We study the problem of whether the centered Hardy--Littlewood maximal function of a singular function is absolutely continuous. For a parameter $d \in (0,1)$ and a closed set $E\subset [0,1]$, let $\mu$ be a $d$-Ahlfors regular measure associated with $E$. We prove that for the cumulative distribution function $f(x)=\mu([0,x])$ its maximal function $Mf$ is absolutely continuous. We then adapt our method to the multiparameter case and show that the same is true in the positive cone defined by these functions, i.e., for functions of the form $f(x)=\sum_{i=1}^{n}\mu_i([0,x])$ where $\{\mu_i\}_{i=1}^{n}$ is any collection of $d_i$-Ahlfors regular measures, $d_i \in (0,1)$, associated with closed sets $E_i\subset [0,1]$. This provides the first improvement of regularity for the classical centered maximal operator, and can be seen as a partial analogue of the result of Aldaz and Pérez Lázaro about the uncentered maximal operator.    
\end{abstract}

\section{Introduction} \label{S1}
The study of regularity properties for maximal operators was started by Kinnunen in \cite{Kinnunen1997}. It was established there that the centered Hardy--Littlewood maximal operator is bounded from $W^{1,p}(\mathbb{R})$ to itself if $p \in (1,\infty)$. This operator is defined as follows: for $f\in L^{1}_{\text{loc}}(\mathbb{R})$, and $m$ the standard Lebesgue measure on $\R$, we let 
\begin{align*}
Mf(x) \coloneqq \sup_{r \in (0,\infty)} \, \intav{B(x,r)}|f|,
\quad \text{where} \quad \intav{B}|f| \coloneqq \frac{\int_{B}|f|}{m(B)}. 
\end{align*}
The said boundedness holds also for the uncentered Hardy--Littlewood maximal operator $\widetilde{M}$, that is, the operator defined with the aid of all balls containing $x$, not just the ones centered at it. 

After Kinnunen's seminal work, many attempts to understand when a given maximal operator preserves the regularity of Sobolev or $BV$ functions were made. In particular, the endpoint case $p=1$ was very widely discussed. First, it was proved by Tanaka \cite{Tanaka2002} that the map $f\mapsto (\widetilde{M}f)'$ is bounded from $W^{1,1}(\mathbb{R})$ to $L^{1}(\mathbb{R})$. Later, in the remarkable article \cite{Kurka2010}, Kurka established the analogous result for the centered operator. In the uncentered case, many -- also multidimensional -- generalizations of these results were made, see for instance \cite{Luiro2017, charweigt}. In particular, in \cite{weigt2021variation} the case of the uncentered operator using multidimensional cubes was investigated in full generality.

Although showing that maximal operators can preserve certain types of regularity is still a lively topic with many open problems and lines of research to study, in this manuscript we aim at exploring the following, more ambitious question:
\begin{question}\label{Q1}
Given a maximal operator $\mathcal M$, for what class of functions $f$ can one observe a substantial improvement between $f$ and $\mathcal M f$ in terms of their regularity?
\end{question}
\noindent This question was considered by Aldaz and Pérez Lázaro. In \cite{AP2007}, they established that $\widetilde{M}f \in AC(\R)$ whenever $f\in BV(\mathbb{R})$\footnote{Here $AC(\R)$ and $BV(\R)$ are the spaces of absolutely continuous functions and functions of bounded variation, respectively.}. Also, improvement of regularity was investigated in \cite{CM2015, HKKT2015, RSW} (for fractional maximal operators) and in \cite{lahti} (for the uncentered Hardy--Littlewood maximal operator in higher dimensions).  

The improvement of regularity established by Aldaz and Pérez Lázaro was a crucial tool in the work \cite{GRK} by the authors, where for the map $f\mapsto \widetilde{M}f$ its continuity from $BV(\mathbb{R})$ to itself was proven. A very interesting open problem following this result is to examine the continuity of the map $f\mapsto Mf$ from $BV(\mathbb{R})$ to itself. In fact, this was asked by Carneiro, Madrid and Pierce \cite{CMP2017} as part of their {\it {one-dimensional endpoint continuity program}}, and among the problems mentioned there it is the only one that remains open (see \cite{grcenteredcontinuity} for further discussion). 

The relation between smoothing properties of maximal operators and the said continuity on $BV(\mathbb{R})$ is one of the motivations to study the following version of Question~\ref{Q1}:
\begin{question}\label{Q2}
For which functions $f\in BV(\mathbb{R})$ do we have $Mf$ absolutely continuous?
\end{question}
\noindent Recall that in the centered case for $f \in BV(\R)$ its maximal function is not necessarily continuous\footnote{Consider, for example, the Heaviside step function.}. On the other hand, for $f \in BV(\R) \cap AC(\R)$ the absolute continuity of $Mf$ is known since \cite[Corollary~1.3]{Kurka2010}. Thus, the first natural set to study is $BV(\R) \cap C(\R) \setminus AC(\R)$. The canonical example of this set is the ternary Cantor function which is analyzed in detail in Section~\ref{S3}. In this case, we obtain the absolute continuity of the maximal function by making use of the arithmetic structure of the Cantor set. However, this approach is not flexible enough to study more general functions. A class of function that is of particular interest for us consists of the ones associated with $d$-Ahlfors regular measures. For a closed set $A \in [0,1]$ and a positive Borel measure $\eta$, we say that $A$ is $d$-Ahlfors regular with respect to $\eta$, $d \in (0,1)$, if there exists a constant $C \in (1, \infty)$ such that, for every $x\in A$ and $r \in (0,1)$, we have 
\[
C^{-1}r^{d}\le \eta([x-r,x+r])\le Cr^{d}.
\]
For brevity, we then also call $\eta$ a $d$-Ahlfors regular measure associated with $A$. In particular, it is known that the Hausdorff measure over the ternary Cantor set is $d$-Ahlfors regular for $d=\frac{\log(2)}{\log(3)}$. In this manuscript, we study Question \ref{Q2} for the positive cone defined by linear combinations of cumulative distribution functions corresponding to these measures.     
In order to deal with this more general case, we develop a new method, similar in spirit to the one used in the Cantor ternary case, but with arithmetic-type arguments properly adjusted. This leads to the following theorem, which is the main result of the article.

\begin{theorem}\label{maintheoremmultiple}
Given $n \in \N$, let $f(x)=\sum_{i=1}^{n}\mu_i([0,x])$ where $\{\mu_i\}_{i=1}^{n}$ is a collection of $d_i$-Ahlfors regular measures, $d_i \in (0,1)$, associated with closed sets $E_i\subset [0,1]$. Then $Mf$ is absolutely continuous. 
\end{theorem}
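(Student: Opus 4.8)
The plan is to reduce the absolute continuity of $Mf$ to a local statement near the ``bad'' set where $f$ fails to be absolutely continuous, and then exploit the Ahlfors regularity to control the behavior of the maximal function there. Since $Mf$ is always locally bounded and, by the general theory, $(Mf)'$ is locally integrable away from the support of the singular part of $f'$, the only possible obstruction to $Mf \in AC$ is a concentration of variation of $Mf$ on the set $E \coloneqq \bigcup_{i=1}^n E_i$ (which has Lebesgue measure zero if any $d_i<1$ forces $m(E_i)=0$; more precisely on the support of the singular part of $\sum_i \mu_i$). So the first step is to establish that $Mf$ is locally Lipschitz, hence absolutely continuous, on the open set $\R \setminus E$, using the fact that there $f$ agrees locally with an absolutely continuous (indeed, away from $[0,1]$, locally constant or affine) function and the classical Kinnunen-type bounds. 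The real content is to show that $Mf$ ``does not see'' the set $E$: that is, $\int_E |(Mf)'| = 0$ in the appropriate sense, equivalently that the distributional derivative of $Mf$ has no singular part supported on $E$.

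The key mechanism, as signposted by the authors (``arithmetic-type arguments properly adjusted''), should be a \emph{disconnecting} or \emph{detachment} phenomenon: at a point $x_0 \in E$, the maximizing radius $r(x_0)$ in the definition of $Mf(x_0)$ cannot be too small. Concretely, I would prove a quantitative lower bound on the optimal radius: there is $\delta_0>0$ such that for every $x_0$ in a neighborhood of $E$, either $Mf(x_0)$ is attained (or approached) by radii $r \geq \delta_0$, or $x_0$ lies in a region where $f$ is already locally affine. The point is that because $\mu_i$ is $d_i$-Ahlfors regular with $d_i<1$, the averages $\fint_{B(x_0,r)} f$ over very small balls centered near $E$ grow sublinearly in a way that is beaten by the average over a fixed larger ball; a Cantor-type self-similarity/scaling estimate comparing $\mu_i([x_0-r,x_0+r]) \asymp r^{d_i}$ with the linear growth needed to make a small ball competitive shows small radii are never optimal near $E$. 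Once the optimal radius is bounded below by $\delta_0$ on a neighborhood $U$ of $E$, the map $x \mapsto Mf(x)$ on $U$ is, after a covering/compactness argument, a supremum of a family $\{ A_r f : r \in [\delta_0, R]\}$ of uniformly Lipschitz functions (the average $A_r f(x) = \fint_{B(x,r)} f$ has $|(A_r f)'(x)| \leq \frac{2\|f\|_\infty}{\delta_0}$ once $r \geq \delta_0$, since $(A_r f)'(x) = \frac{f(x+r)-f(x-r)}{2r}$), hence Lipschitz on $U$, hence absolutely continuous there. Combining the two open sets $\R\setminus E$ and $U$ gives $Mf \in AC_{\mathrm{loc}}(\R)$, and since $f$ (hence $Mf$) is constant outside a compact interval, $Mf \in AC(\R)$.

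For the multiparameter case $f = \sum_{i=1}^n \mu_i([0,\cdot])$, the extra difficulty is that the sets $E_i$ may overlap and the exponents $d_i$ differ, so the scaling at a point $x_0$ is governed by the \emph{smallest} relevant $d_i$ (giving the largest small-scale mass) and one must ensure the ``small radius is suboptimal'' estimate survives when several Ahlfors components pile up. I would handle this by proving the lower bound on the optimal radius uniformly: for $x_0 \in E_j$ and $r$ small, $\sum_i \mu_i(B(x_0,r)) \le \sum_i C_i r^{d_i} \leq C r^{\min_i d_i}$, which is still $o(r)$, so the comparison with a fixed macroscopic ball goes through with constants depending only on $n$, the $C_i$, and the $d_i$. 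The main obstacle I anticipate is making the ``optimal radius bounded below near $E$'' claim genuinely uniform and quantitative — in particular handling points $x_0$ that are near $E$ but not in $E$, where the optimal radius could a priori be small because the relevant tiny ball sits in a gap of $E$ yet $x_0$ is close enough to $E$ that $Mf$ still has large variation; this is exactly where the Cantor-ternary argument of Section~\ref{S3} has to be replaced by a softer estimate, presumably a dichotomy based on the distance from $x_0$ to $E$ versus the scale, combined with the $BV$ bound on the variation of $Mf$ to show the contributions from ``small optimal radius near $E$'' points form a set of measure zero for $(Mf)'$.
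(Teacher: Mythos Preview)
Your central mechanism is false, and the scaling heuristic behind it is backwards. For $d<1$ one has $r^{d}\gg r$ as $r\to 0$, not $r^{d}=o(r)$; Ahlfors regularity with exponent $d<1$ means the measure is \emph{more} concentrated than Lebesgue at small scales, which is precisely why $f$ is singular. So your sentence ``$\sum_i \mu_i(B(x_0,r)) \le C r^{\min_i d_i}$, which is still $o(r)$'' is simply wrong, and the conclusion you draw from it (that averages over small balls are beaten by a fixed macroscopic ball) does not follow.

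Concretely, the dichotomy ``either the optimal radius is $\ge\delta_0$ or $f$ is locally affine near $x_0$'' fails already for the ternary Cantor function $h$ at $x_0=1$ (or, by self-similarity, at $x_0=\tfrac{1}{3},\tfrac{1}{9},\dots$). A direct computation shows that $r\mapsto A_r h(1)=\frac{1}{2r}\int_{1-r}^{1+r}h$ is strictly decreasing on $(0,\infty)$, so $Mh(1)=\lim_{r\to 0}A_r h(1)=h(1)=1$ while $\sup_{r\ge\delta_0}A_r h(1)=A_{\delta_0}h(1)<1$ for every $\delta_0>0$. Thus $x_0=1$ lies in $E$, $h$ is not locally affine there, and yet no positive lower bound on the competing radii exists. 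In particular the set $C_f=\{x:Mf(x)=f(x)\}$ is genuinely nonempty inside $E$, and at every such point your Lipschitz-from-below-bounded-radii argument cannot apply. The representation $Mf=\sup_{r\ge\delta_0}A_r f$ you want on a neighborhood of $E$ is therefore unavailable.

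The paper's route is quite different. Using Banach--Zarecki together with Kurka's local Lipschitz bound on $\{Mf>f\}$, one reduces to proving $m\bigl(f(C_f)\bigr)=0$. This is \emph{not} done by showing $Mf$ is Lipschitz near $E$ (it need not be), but by showing that the image $f(C_{f,\delta}\cap I)$ has uniformly many gaps: at each right endpoint $b_i$ of a complementary interval of $E$, the average over $[b_i-2r_i,b_i+2r_i]$ exceeds $f(b_i)$ by a fixed fraction of $\mu([b_i,b_i+r_i])$, which (via a Vitali/Besicovitch selection and the Ahlfors condition) removes a fixed proportion of $f(I)$ from the image; iterating this on the remaining pieces gives a geometric decay to zero. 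For $n>1$ the same scheme is run inductively on the smallest exponent $d_{n+1}$, using that $\mu_{n+1}$ dominates the other $\mu_i$ at small scales. You may want to try to repair your approach by proving directly that $m(f(C_f))=0$ rather than seeking a Lipschitz bound on $Mf$ near $E$.
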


\noindent Theorem~\ref{maintheoremmultiple} can be seen as a partial analogue of the result of Aldaz and Pérez Lázaro \cite[Theorem~2.5]{AP2007} where they studied Question \ref{Q2} for $\widetilde{M}$. We emphasize that, in questions about regularity, centered operators are usually the more challenging ones. Also, we would like to mention that Question \ref{Q2} for general singular monotonic continuous functions remains a very interesting open problem. 

The rest of the paper is organized as follows. In Section~\ref{S2} we collect several basic facts about maximal functions and discuss a very helpful reduction that can be made in the proof of Theorem~\ref{maintheoremmultiple}. Section~\ref{S3} is devoted to studying the special case of the Cantor function. In Section~\ref{S4} we prove Theorem~\ref{maintheoremmultiple} for $n=1$. According to the reduction stated in Section~\ref{S2}, the proof is based on estimating the Lebesgue measure of the image of the set $\{x\in [0,1]; f(x)=Mf(x)\}$. A key insight here comes from the observation that this set resembles a fractal. We construct a recursive geometric-type process which plays a crucial role in this analysis. 
Finally, in Section~\ref{S5} we prove the main theorem in full generality. Here we use an inductive argument which combines the result for $n=1$ with some refinements of the techniques used in this case. In particular, the interplay between the measures of different dimensions needs to be treated carefully. 

\subsection{Acknowledgements} 
The authors are grateful to Emanuel Carneiro for inspiring discussions on the topic of this project. The first author is also thankful to Andrea Olivo for helpful discussions.
The second author was supported by the Basque Government through the BERC 2018-2021 program, by the Spanish State Research Agency through BCAM Severo Ochoa excellence accreditation SEV-2017-2018, and by the Foundation for Polish Science through the START Scholarship.
\section{Preliminaries and main reduction} \label{S2}
 
Let us recall some known facts that will be useful for our purposes.
\begin{lemma}\label{continuous}
If $g\in BV(\R) \cap C(\R)$, then $Mg \in C(\R)$. 
\end{lemma}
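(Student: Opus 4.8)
The plan is to show that $Mg$ is both lower and upper semicontinuous on $\R$. Lower semicontinuity is automatic and requires no hypotheses: for fixed $x$ and any $r>0$, the map $y\mapsto \intav{B(y,r)}|g|$ is continuous (since $g\in L^1_{\mathrm{loc}}$ and translation is continuous in $L^1_{\mathrm{loc}}$), so $Mg$, being a supremum of continuous functions, is lower semicontinuous. The real content is upper semicontinuity, and this is where the hypotheses $g\in C(\R)$ and $g\in BV(\R)$ enter.

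For upper semicontinuity at a point $x_0$, I would argue as follows. Suppose $\lambda < Mg(x_0)$; I want a neighbourhood of $x_0$ on which $Mg$ stays above... no — I want the reverse: suppose $Mg(x_0) < \lambda$ and produce a neighbourhood on which $Mg < \lambda$. Fix $\lambda > Mg(x_0)$. First use $g\in BV(\R)$ to control large radii: since $g\in BV(\R)$, $g$ is bounded and $|g(t)|\to$ finite limits (in fact $g$ is bounded with $\|g\|_\infty<\infty$), so $\intav{B(x,r)}|g| \le \frac{1}{2r}\int_{B(x,r)}|g| \to 0$... this needs $g\in L^1$, which is not assumed. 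Instead: for $r$ large, $\intav{B(x,r)}|g| \le \|g\|_{L^\infty}$, and more precisely, because $g$ has bounded variation, $\sup_{r\ge R}\intav{B(x,r)}|g|$ can be made $<\lambda$ uniformly for $x$ in a bounded set once $R$ is large (the averages over huge balls converge to the average of the "limit values" of $|g|$ at $\pm\infty$, which is $\le \|g\|_\infty$; if $\|g\|_\infty \ge \lambda$ one instead notes $\lambda>Mg(x_0)\ge \limsup_{r\to\infty}\intav{B(x_0,r)}|g|$ and uses BV to transfer this bound to nearby $x$). So it suffices to control radii in a compact range $r\in[r_0,R]$ with $r_0>0$. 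On that compact range, I would show $(x,r)\mapsto \intav{B(x,r)}|g|$ is \emph{jointly continuous}: this uses only $g\in L^1_{\mathrm{loc}}$ for continuity in $x$ and $g\in C(\R)$ (or just local integrability) for continuity in $r$ since $r_0>0$ bounds the denominator away from $0$. Hence $\sup_{r\in[r_0,R]}\intav{B(x,r)}|g|$ is continuous in $x$, so it remains $<\lambda$ near $x_0$; and the small-radius part $\sup_{r\le r_0}\intav{B(x,r)}|g|$ is handled by continuity of $g$ at $x_0$: $\intav{B(x,r)}|g|$ is close to $|g(x_0)|\le Mg(x_0)<\lambda$ for $x$ near $x_0$ and $r$ small.

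The main obstacle is the large-radius regime, where $g$ need not be integrable: one must genuinely exploit $g\in BV(\R)$ to get that $\limsup_{r\to\infty}\intav{B(x,r)}|g|$ is upper semicontinuous (indeed controlled uniformly for $x$ near $x_0$) rather than just pointwise bounded. The clean way is: write $g = g(-\infty) + h$ where $h\in BV(\R)$ with $h(-\infty)=0$, so $h$ is bounded and $\int |h'|<\infty$ as a measure; then $\bigl|\intav{B(x,r)}|g| - c\bigr|$ for the appropriate constant $c$ is $O(\operatorname{Var}(g)/r)$ uniformly in $x$, which is $<\lambda - Mg(x_0)$ once $r\ge R$, uniformly in $x$. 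Combining the three regimes (small, medium, large radii) gives a neighbourhood of $x_0$ on which $Mg<\lambda$, establishing upper semicontinuity; together with lower semicontinuity this yields $Mg\in C(\R)$.
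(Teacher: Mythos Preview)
Your argument is essentially correct and is considerably more detailed than what the paper does: the paper simply invokes \cite[Lemma~7.4]{Kurka2010} and moves on. Your three-regime decomposition (small, medium, large radii) is the natural direct route, and the small and medium regimes are handled cleanly.

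One point in the large-radius regime deserves tightening. The estimate you state, $\bigl|\intav{B(x,r)}|g| - c\bigr| = O(\operatorname{Var}(g)/r)$ uniformly in $x$, is not quite right: convergence of the averages to $c=\tfrac12(|g(-\infty)|+|g(+\infty)|)$ is neither uniform over all $x\in\R$ nor at rate $1/r$ in general. What you actually need (and what your earlier phrasing ``controlled uniformly for $x$ near $x_0$'' suggests you intend) is much simpler: for $|x-x_0|\le 1$ and any $r>0$,
\[
\Bigl|\intav{B(x,r)}|g| - \intav{B(x_0,r)}|g|\Bigr| \le \frac{|x-x_0|\,\|g\|_{L^\infty}}{r} \le \frac{\|g\|_{L^\infty}}{r},
\]
using only that $g\in BV(\R)$ implies $g\in L^\infty(\R)$. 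Since $\intav{B(x_0,r)}|g|\le Mg(x_0)<\lambda$ for every $r$, choosing $R$ with $\|g\|_{L^\infty}/R < \lambda - Mg(x_0)$ gives $\sup_{r\ge R}\intav{B(x,r)}|g|<\lambda$ for all $x$ with $|x-x_0|\le 1$. With this correction the proof goes through; no appeal to the limit value $c$ or to the decomposition $g=g(-\infty)+h$ is needed.
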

\begin{proof}
This follows from \cite[Lemma~7.4]{Kurka2010}.
\end{proof}
\begin{lemma}\label{locallyl}
 If $g\in BV(\R) \cap C(\mathbb{R})$, then the set $D_g \coloneqq \{x\in \mathbb{R}; Mg(x)>|g|(x)\}$ is open and $Mg$ is locally Lipschitz in $D_g$. 
\end{lemma}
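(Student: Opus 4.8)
The plan is to prove the two assertions — openness of $D_g$ and local Lipschitz continuity of $Mg$ on $D_g$ — more or less in tandem, since both follow from a quantitative version of the ``strict domination'' $Mg(x) > |g|(x)$. First I would record that, since $g \in BV(\R) \cap C(\R)$, Lemma~\ref{continuous} gives $Mg \in C(\R)$, and of course $|g| \in C(\R)$; hence $Mg - |g|$ is continuous, so $D_g = \{Mg - |g| > 0\}$ is open immediately. The content of the lemma is therefore really the local Lipschitz statement.

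For that, fix $x_0 \in D_g$ and set $\delta \coloneqq Mg(x_0) - |g|(x_0) > 0$. By continuity of $Mg$ and of $|g|$, choose a compact neighbourhood $I = [x_0 - \rho, x_0 + \rho]$ of $x_0$ on which $Mg - |g| \ge \delta/2$. The key point is that on $I$ the supremum defining $Mg$ is effectively attained over radii bounded below: if $r$ is so small that $\intav{B(x,r)}|g| \le |g|(x) + \delta/4$ (which holds uniformly for $x \in I$ and $r \le r_0$ for some $r_0 > 0$, again by continuity of $g$ and hence uniform continuity on compacts), then such $r$ cannot realize the supremum, because $Mg(x) \ge |g|(x) + \delta/2$. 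Hence for $x \in I$ we have
\[
Mg(x) = \sup_{r \ge r_0} \, \intav{B(x,r)}|g|.
\]
Now for a fixed radius $r \ge r_0$ the function $x \mapsto \intav{B(x,r)}|g| = \frac{1}{2r}\int_{x-r}^{x+r}|g|$ is Lipschitz in $x$ with constant $\frac{2\|g\|_{L^\infty(J)}}{2r} \le \frac{\|g\|_{L^\infty(J)}}{r_0}$, where $J$ is a fixed compact interval containing all the relevant balls $B(x,r)$ with $x \in I$ and $r$ in the range we must consider — and here we must also truncate $r$ from above, but that is automatic since $\intav{B(x,r)}|g| \to 0$ as $r \to \infty$ while $Mg(x) \ge \delta/2 > 0$ on $I$, so only radii $r \le R$ for some finite $R$ matter. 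Taking the supremum over $r \in [r_0, R]$ of a family of functions that are all Lipschitz with the same constant $L \coloneqq \|g\|_{L^\infty(J)}/r_0$ yields that $Mg$ is Lipschitz on $I$ with constant $L$. Since $x_0 \in D_g$ was arbitrary, $Mg$ is locally Lipschitz on $D_g$.

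The main obstacle — really the only subtlety — is making rigorous the claim that small radii and large radii can be discarded \emph{uniformly} in a neighbourhood of $x_0$, so that the sup is over a compact range of radii $[r_0, R]$ on which the integrands form an equi-Lipschitz family; this is where continuity (indeed uniform continuity on compacts) of $g$ and the continuity of $Mg$ from Lemma~\ref{continuous} are both used. Once the radius range is pinned down to a fixed compact interval bounded away from $0$, the Lipschitz bound is the elementary observation that a supremum of uniformly Lipschitz functions is Lipschitz with the same constant. One should also note at the start that $g \in BV(\R)$ ensures $g \in L^\infty(\R)$, so all the averages and the constant $L$ are finite.
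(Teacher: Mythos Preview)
Your plan is correct and is essentially the standard argument that the paper invokes by citing Kurka's Lemma~7.1: openness follows from continuity of $Mg-|g|$, and local Lipschitz continuity follows by showing that near any $x_0\in D_g$ the supremum defining $Mg$ may be restricted to radii $r\ge r_0>0$, after which the averaging functions form an equi-Lipschitz family.

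One small point deserves correction. Your justification for the upper truncation $r\le R$ is that $\intav{B(x,r)}|g|\to 0$ as $r\to\infty$, but this is false for general $g\in BV(\R)$: for instance $g$ could have nonzero limits at $\pm\infty$ (think of $g(x)=\arctan x$). Fortunately the upper truncation is simply unnecessary. Since $g\in BV(\R)$ gives $g\in L^\infty(\R)$ globally, for every $r\ge r_0$ and all $x,y$ one has
\[
\Big|\intav{B(x,r)}|g|-\intav{B(y,r)}|g|\Big|\le \frac{\|g\|_{L^\infty(\R)}}{r}\,|x-y|\le \frac{\|g\|_{L^\infty(\R)}}{r_0}\,|x-y|,
\]
so the family $\{x\mapsto \intav{B(x,r)}|g|:r\ge r_0\}$ is already equi-Lipschitz with constant $\|g\|_{L^\infty(\R)}/r_0$, and its (finite) supremum inherits this constant. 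Thus you can drop the discussion of $R$ and $J$ entirely and use the global $L^\infty$ bound from the outset; the rest of your argument goes through unchanged.
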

\begin{proof}
This follows as in \cite[Lemma~7.1]{Kurka2010}
\end{proof}
\noindent Also, let us recall the classical Banach--Zarecki theorem.
\begin{lemma}[Banach--Zarecki theorem] \label{banackzarechi}
 A function $g \colon [0,1]\to \mathbb{R}$ is absolutely continuous if and only if it is continuous, of bounded variation, and maps $m$-null sets to $m$-null sets. Also, for $g \colon \mathbb{R}\to \mathbb{R}$ the last three properties together imply absolute continuity. 
\end{lemma}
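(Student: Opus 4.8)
The plan is to prove the two directions separately, treating the equivalence on $[0,1]$ first and then deducing the implication on all of $\R$. The forward implication (absolute continuity yields the three listed properties) is essentially routine and I would dispatch it quickly; the substance lies in the converse, where continuity, bounded variation, and the property of mapping $m$-null sets to $m$-null sets (Luzin's condition (N)) must be combined to recover the defining $\epsilon$--$\delta$ estimate of absolute continuity.

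For the forward direction, suppose $g$ is absolutely continuous on $[0,1]$. Continuity is immediate by applying the definition to a single interval. To obtain bounded variation, fix the $\delta$ associated with $\epsilon = 1$, split $[0,1]$ into finitely many abutting subintervals each of length less than $\delta$, and observe that the variation over each is at most $1$, so $V_0^1(g) < \infty$. For condition (N), let $E$ be $m$-null and, given $\epsilon > 0$, choose the corresponding $\delta$ and an open set $U \supseteq E$ with $m(U) < \delta$; writing $U$ as a countable disjoint union of intervals $(a_i,b_i)$ and using continuity to pick points $c_i, d_i \in [a_i,b_i]$ realizing the maximum and minimum of $g$ there, the intervals with endpoints $c_i, d_i$ are disjoint of total length less than $\delta$, whence $m^*(g(E)) \le \sum_i |g(c_i) - g(d_i)| \le \epsilon$. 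Letting $\epsilon \to 0$ gives $m(g(E)) = 0$.

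For the converse I would argue as follows. Since $g$ is continuous and of bounded variation, $g'$ exists and is finite $m$-a.e., and $\int_0^1 |g'| \, dm \le V_0^1(g) < \infty$, so $g' \in L^1([0,1])$. Let $E_0$ be the measurable set on which $g'$ exists and is finite, and let $Z = [0,1] \setminus E_0$, so $m(Z) = 0$. The two geometric ingredients are: first, the classical pointwise bound $m^*(g(A)) \le \int_A |g'| \, dm$ valid for every measurable $A \subseteq E_0$, which follows from a Vitali-covering argument on the set where $g$ is differentiable and requires no extra hypothesis there; and second, that continuity forces $g([a,b])$ to be an interval containing $g(a)$ and $g(b)$, so $|g(b) - g(a)| \le m\big(g([a,b])\big)$. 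Now take any finite family of disjoint intervals $(a_j,b_j) \subseteq [0,1]$. Combining the two ingredients with subadditivity and splitting each image $g([a_j,b_j])$ according to $E_0$ and $Z$ yields
\[
\sum_j |g(b_j) - g(a_j)| \le \int_{\bigcup_j (a_j,b_j)} |g'| \, dm + \sum_j m^*\big(g([a_j,b_j] \cap Z)\big).
\]
Here Luzin's condition (N) enters decisively: since $m(Z) = 0$ we get $m(g(Z)) = 0$, and as each $g([a_j,b_j] \cap Z)$ is a subset of the null set $g(Z)$, the entire second sum vanishes. By absolute continuity of the Lebesgue integral of the $L^1$ function $|g'|$, the remaining term is less than any prescribed $\epsilon$ once $\sum_j (b_j - a_j)$ is small enough, which is exactly absolute continuity of $g$.

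I expect the main obstacle to be the pointwise estimate $m^*(g(A)) \le \int_A |g'|\, dm$ on the differentiability set $E_0$: this is where one must control the image of an arbitrary measurable set by the derivative, and the clean way is to stratify $E_0$ by the size of $|g'|$, cover each stratum efficiently by intervals on which $g$ expands by a controlled factor (justified by differentiability), and pass to the limit via a Vitali covering argument. Everything else---differentiability $m$-a.e. of $BV$ functions and absolute continuity of the Lebesgue integral---is standard. Finally, the statement on $\R$ follows by running the converse argument on each compact interval $[-N,N]$, noting that finite total variation on $\R$ guarantees $g' \in L^1(\R)$, so that $g$ is absolutely continuous on every compact interval and hence on $\R$.
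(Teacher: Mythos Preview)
The paper states this lemma as the classical Banach--Zarecki theorem and offers no proof of its own, so there is nothing to compare against. Your argument is the standard one and is correct: the forward direction is routine, and for the converse you use that a continuous $BV$ function is differentiable $m$-a.e.\ with $g' \in L^1$, the estimate $m^*(g(A)) \le \int_A |g'|\,dm$ on the differentiability set, Luzin's condition (N) to kill the contribution of the non-differentiability set, and absolute continuity of the Lebesgue integral to finish. One minor point on the $\R$ extension: being absolutely continuous on every compact interval does not by itself yield the global $\epsilon$--$\delta$ property, but since you have $g' \in L^1(\R)$ the $\delta$ coming from absolute continuity of the integral is uniform, so the global statement follows.
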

\noindent Combining these results gives the following proposition reducing the problem stated in Theorem~\ref{maintheoremmultiple}.
\begin{proposition}\label{justluzinproperty}
If $g\in BV(\R) \cap C(\R)$ and $m(g(\{x\in \mathbb{R}; Mg(x)=|g|(x)\}))=0$, then $Mg \in AC(\R)$.       
\end{proposition}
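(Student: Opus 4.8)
The plan is to deduce the proposition from the Banach--Zarecki theorem (Lemma~\ref{banackzarechi}), so the goal becomes showing that $Mg$ is continuous, of bounded variation, and maps $m$-null sets to $m$-null sets. Continuity is immediate from Lemma~\ref{continuous} since $g \in BV(\R)\cap C(\R)$. Bounded variation of $Mg$ is classical: one has $\var(Mg) \le \var(g)$ (indeed $Mg \ge |g|$ pointwise and the maximal function of a $BV$ function does not increase the total variation — this is the endpoint bound underlying Kurka's theorem, or can be quoted directly). So the crux is the Luzin-type property: $m(N)=0 \implies m(Mg(N))=0$.

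To establish the Luzin property I would split an arbitrary $m$-null set $N$ into two pieces according to the decomposition of $\R$ furnished by Lemma~\ref{locallyl}: write $N = (N \cap D_g) \cup (N \setminus D_g)$, where $D_g = \{x : Mg(x) > |g|(x)\}$. On $D_g$, which is open, $Mg$ is locally Lipschitz by Lemma~\ref{locallyl}; covering $D_g$ by countably many intervals on each of which $Mg$ is Lipschitz, and using that Lipschitz maps send null sets to null sets, we get $m(Mg(N \cap D_g)) = 0$. On the complement $\R \setminus D_g = \{x : Mg(x) = |g|(x)\}$ we have $Mg = |g|$, hence
\[
Mg(N \setminus D_g) \subseteq |g|\big(\{x : Mg(x) = |g|(x)\}\big),
\]
and the latter set has measure zero precisely by the hypothesis $m(g(\{x : Mg(x) = |g|(x)\})) = 0$ — here I would note that $|g|(A) \subseteq g(A) \cup (-g(A))$, so the stated hypothesis on $g$ controls the image under $|g|$ as well, since $-g(A)$ is a reflection of a null set and hence null. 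Therefore $m(Mg(N \setminus D_g)) = 0$, and combining the two pieces gives $m(Mg(N)) = 0$.

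With continuity, bounded variation, and the Luzin property all in hand, Banach--Zarecki (Lemma~\ref{banackzarechi}, in its $\R \to \R$ form) yields $Mg \in AC(\R)$, completing the proof. I do not expect any serious obstacle here: the only mild subtlety is bookkeeping the passage from $g$ to $|g|$ in the hypothesis, and making sure the local Lipschitz constants on the covering intervals of $D_g$ are used correctly (countable subadditivity of $m$ handles this). The real mathematical content — verifying that the ``contact set'' $\{Mg = |g|\}$ has null image — is exactly what is deferred to the later sections and is not part of this reduction.
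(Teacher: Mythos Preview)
Your proposal is correct and follows essentially the same route as the paper's proof: continuity from Lemma~\ref{continuous}, bounded variation from Kurka's theorem, and the Luzin property via the split $N = (N \cap D_g) \cup (N \setminus D_g)$ together with Lemma~\ref{locallyl}. One small correction: the sharp inequality $\var(Mg) \le \var(g)$ is not known for the centered operator (Kurka's constant is large), though the weaker bound suffices here; on the other hand, your explicit handling of the passage from $g$ to $|g|$ via $|g|(A) \subseteq g(A) \cup (-g(A))$ is actually more careful than the paper, which tacitly uses $Mg = |g|$ on the contact set.
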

\begin{proof}
The function $Mg$ is continuous by Lemma~\ref{continuous} and of bounded variation by Kurka's theorem \cite{Kurka2010}. Therefore, by Lemma~\ref{banackzarechi} it is enough to prove that for any $m$-null set $N$ we have $m(Mg(N))=0$. Let us write $N=N_1\cup N_2$ where $N_1 \coloneqq \{x\in N; Mg(x)=|g|(x)\}$ and $N_2 \coloneqq \{x\in N; Mg(x)>|g|(x)\}$. We have $m(Mg(N_2))=0$ because $N_2 \subset D_g$ and $Mg$ is locally Lipschitz in $D_g$ by Lemma~\ref{locallyl}. Therefore, since $m(Mg(N_1))\le m(Mg(\{x\in \mathbb{R};Mg(x)=|g|(x)\}))=0$, the thesis follows.      
\end{proof}

%\noindent We also emphasize that the following is true.  

%\begin{remark}\label{R1}
%If $g\in BV(\R) \cap AC(\R)$, then $Mg \in AC(\R)$.	
%\end{remark}

%\noindent Indeed, as before it suffices to show $m(Mg(N))=0$ for an arbitrary $m$-null set $N$. For $N_1$ we repeat the argument from Proposition~\ref{justluzinproperty}. For $N_2$ we have $Mg(N_2)=|g|(N_2)$ and hence $m(Mg(N_2)) = m(|g|(N_2)) = 0$, where the last equality can be deduced from the Banach--Zarecki theorem, since $m(N_2)=0$ and $|g| \in AC(\R)$.    

\section{Toy model: ternary Cantor function} \label{S3}
We first consider a particular example. Let $h$ be the ternary Cantor function associated with the ternary Cantor measure $\mu_{C}$ supported on the Cantor set $\mathcal{C}$. Precisely, one can define $h$ as follows:
\[
h(x) \coloneqq \begin{cases}
0, & \quad x \in (-\infty,0),\\
\sum_{k=1}^\infty \frac{a_k}{2^k}, &\quad x = \sum_{k=1}^\infty \frac{2a_k}{3^k} \in \mathcal C \text{ for some } (a_k)_{k=1}^\infty \in \{0,1\}^{\mathbb N},\\
\sup_{y \in \mathcal C \cap [0,x]} h(y), & \quad x \in [0,1] \setminus \mathcal C, \\
1, & \quad x \in (1,\infty).
\end{cases} 
\]
Also, denote $C_h \coloneqq \{ x \in \mathbb R; Mh(x) = h(x)\}$.
In this case, thanks to the very regular structure of $\mathcal C$, one can show $m(h (C_{h})) = 0$ straightforward. Although the general case, that is treated in the next sections, requires much deeper analysis, some ideas behind the proof can be found in this particular example. 
 
\begin{example}\label{E1}
	For the Cantor function $h$ one has $m(h(C_h)) = 0$.
\end{example}

\begin{center}
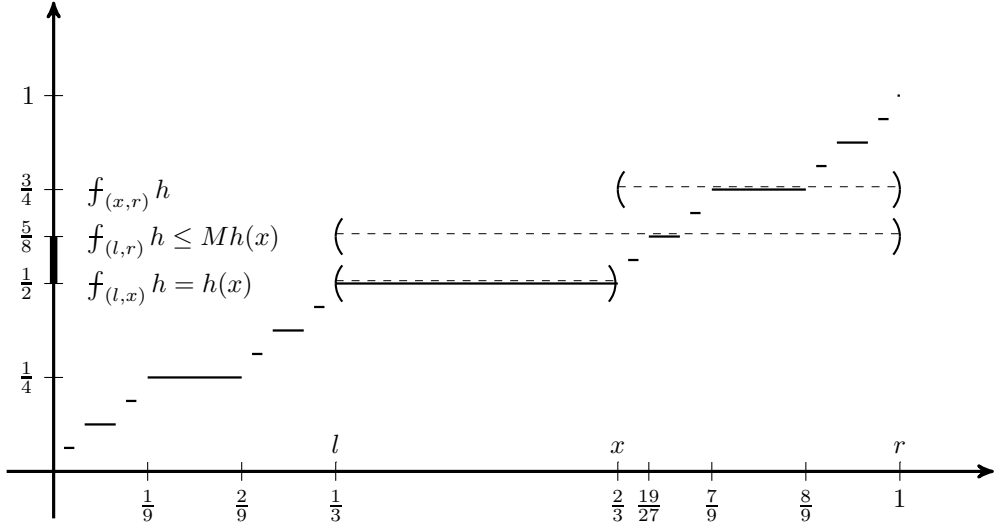
\begin{figure}[H]
\begin{tikzpicture}
[
scale=1.25,
axis/.style={very thick,->,>=stealth'},
important line/.style={thick},
dashed line/.style={dashed, thin}
]

\draw[axis] (-0.5,0)  -- (10,0) node(xline)[right]
{};
\draw[axis] (0,-0.5) -- (0,5) node(yline)[above] {};

\draw (-0.1,1) node[left] {$\frac{1}{4}$} -- (0.1,1);
\draw (-0.1,2) node[left] {$\frac{1}{2}$} -- (0.1,2);
\draw (-0.1,3) node[left] {$\frac{3}{4}$} -- (0.1,3);
\draw (-0.1,4) node[left] {$1$} -- (0.1,4);

\draw (-0.1,2.5) node[left] {$\frac{5}{8}$} -- (0.1,2.5);

\draw (1,-0.1) node[below] {$\frac{1}{9}$} -- (1,0.1);
\draw (2,-0.1) node[below] {$\frac{2}{9}$} -- (2,0.1);
\draw (3,-0.1) node[below] {$\frac{1}{3}$} -- (3,0.1);

\draw (6,-0.1) node[below] {$\frac{2}{3}$} -- (6,0.1);
\draw (7,-0.1) node[below] {$\frac{7}{9}$} -- (7,0.1);
\draw (8,-0.1) node[below] {$\frac{8}{9}$} -- (8,0.1);
\draw (9,-0.1) node[below] {$1$} -- (9,0.1);

\draw (3,0.1) node[above] {$l$} -- (3,0.1);
\draw (6,0.1) node[above] {$x$} -- (6,0.1);
\draw (9,0.1) node[above] {$r$} -- (9,0.1);

\draw (0.0,1.95) node[right] {$\quad \intav{(l,x)}h = h(x)$} -- (0.0,1.95);
\draw (0.0,2.45) node[right] {$\quad \intav{(l,r)}h \leq Mh(x)$} -- (0.0,2.45);
\draw (0.0,2.95) node[right] {$\quad \intav{(x,r)}h$} -- (0.0,2.95);

\draw (6.33,-0.1) node[below] {$\frac{19}{27}$} -- (6.33,0.1);

\draw[important line, line width=0.3mm] (3,2) -- (6,2);

\draw[important line, line width=0.3mm] (1,1) -- (2,1);
\draw[important line, line width=0.3mm] (7,3) -- (8,3);

\draw[important line, line width=0.3mm] (0.33,0.5) -- (0.66,0.5);
\draw[important line, line width=0.3mm] (2.33,1.5) -- (2.66,1.5);
\draw[important line, line width=0.3mm] (6.33,2.5) -- (6.66,2.5);
\draw[important line, line width=0.3mm] (8.33,3.5) -- (8.66,3.5);

\draw[important line, line width=0.3mm] (0.11,0.25) -- (0.22,0.25);
\draw[important line, line width=0.3mm] (0.77,0.75) -- (0.88,0.75);
\draw[important line, line width=0.3mm] (2.11,1.25) -- (2.22,1.25);
\draw[important line, line width=0.3mm] (2.77,1.75) -- (2.88,1.75);
\draw[important line, line width=0.3mm] (6.11,2.25) -- (6.22,2.25);
\draw[important line, line width=0.3mm] (6.77,2.75) -- (6.88,2.75);
\draw[important line, line width=0.3mm] (8.11,3.25) -- (8.22,3.25);
\draw[important line, line width=0.3mm] (8.77,3.75) -- (8.88,3.75);

\draw[important line, line width=0.3mm] (8.975,4) -- (9,4);

\draw[dashed line, line width=0.1mm] (3,2.03) -- (6,2.03);
\draw[dashed line, line width=0.1mm] (3,2.53) -- (9,2.53);
\draw[dashed line, line width=0.1mm] (6,3.03) -- (9,3.03);

%\node at (6,2)[circle,fill,inner sep=1.5pt]{};
%\node at (6,2.5)[circle,fill,inner sep=1.5pt]{};

\draw[important line, line width=1mm] (0,2) -- (0,2.5);

\draw[black, thick] (3,2) arc (180:140:0.3);
\draw[black, thick] (3,2) arc (180:220:0.3);

\draw[black, thick] (5.975,2) arc (0:40:0.3);
\draw[black, thick] (5.975,2) arc (0:-40:0.3);

\draw[black, thick] (6,3) arc (180:140:0.3);
\draw[black, thick] (6,3) arc (180:220:0.3);

\draw[black, thick] (9,3) arc (0:40:0.3);
\draw[black, thick] (9,3) arc (0:-40:0.3);

\draw[black, thick] (3,2.5) arc (180:140:0.3);
\draw[black, thick] (3,2.5) arc (180:220:0.3);

\draw[black, thick] (9,2.5) arc (0:40:0.3);
\draw[black, thick] (9,2.5) arc (0:-40:0.3);

\end{tikzpicture}
\caption{Maximal operator of the Cantor function.}
\label{Pic1}
\end{figure}
\end{center}

Consider $x = \frac{2}{3}, l = \frac{1}{3}, r = 1$. The average values of $h$ on $(l,x)$ and $(x,r)$ are respectively $h(x) = \frac{1}{2}$ and $\frac{h(x) + h(r)}{2} = \frac{3}{4}$ (see Figure~\ref{Pic1}). Thus, the average value on $(l, r)$ is $\frac{3 h(x) + h(r)}{4} = \frac{5}{8} = h(\frac{19}{27}) = h(\frac{8x + r}{9})$ which means that $M h(x) \geq h(\frac{8x+r}{9})$. Since $h, M h$ are increasing, we deduce that the whole interval $(h(x),h(\frac{8x+r}{9}))= (\frac{1}{2},\frac{5}{8})$ lies outside $h (C_{h})$.
More generally, take $x = \sum_{k=1}^\infty \frac{2a_k}{3^k} \in \mathcal C$ with $(a_k)_{k=1}^\infty \in \{0,1\}^{\mathbb N}$ of the form $(a_1, \dots, a_{K-1}, 1, 0, 0, \dots)$ for some arbitrarily chosen $K \in \mathbb N$ and $a_1, \dots, a_{K-1} \in \{0,1\}$. Let $l, r \in \mathcal C$ be the points corresponding to the sequences $(a_1, \dots, a_{K-1}, 0, 1, 1, \dots)$ and $(a_1, \dots, a_{K-1}, 1, 1, 1, \dots)$, respectively. Then the average values of $h$ on $(l,x)$ and $(x,r)$ are respectively $h(x) = \sum_{k=1}^\infty \frac{a_k}{2^k}$ and $\frac{1}{2}(h(x) + h(r)) = 2^{-K-1} + \sum_{k=1}^\infty \frac{a_k}{2^k}$. This means that $M h(x) \geq 2^{-K-2} + \sum_{k=1}^\infty \frac{a_k}{2^k} = h(\frac{8x+r}{9})$. We deduce that the interval $(h(x), h(\frac{8x+r}{9}))$ lies outside $h (C_{h})$, and observe that the point $\frac{8x+r}{9} \in \mathcal C$ corresponds to the sequence $(a_1, \dots, a_{K-1}, 1, 0, 0, 1, 1, \dots)$. Denote $\mathcal D = \{ i2^{j} : i, j \in \mathbb Z \}$ and for $y \in [0,1] \setminus \mathcal D$ let $(b^y_1,b^y_2,\dots) \in (0,1)^{\mathbb N}$ be the (uniquely determined) sequence of coefficients in the binary expansion of $y$, that is, $y = \sum_{k =1}^{\infty} \frac{b^y_k}{2^k}$. According to this notation, the interval $(h(x), h(\frac{8x+r}{9}))$ contains all points $y \in [0,1] \setminus \mathcal D$ such that $(b^y_1, \dots, b^y_{K+2}) = (a_1, \dots, a_{K-1},1,0,0)$. 
%Since $N \in \mathbb N$ and $a_1,\dots,a_N \in \{0,1\}$ are arbitrary, 
Thus, we conclude that $h (C_{h})$ contains no elements $y \in [0,1] \setminus \mathcal D$ with the pattern $(1,0,0)$ occurring in the corresponding sequences $(b^y_1, b^y_2,\dots)$. However, the set $\mathcal D$ is of Lebesgue measure zero and for almost all $y \in [0,1] \setminus \mathcal D$ there exists $K \in \mathbb N$ such that $(b^y_{K}, b^y_{K+1}, b^y_{K+2})=(1,0,0)$. Therefore, we have $m(h (C_{h})) = 0$, as desired.         

\section{Theorem \ref{maintheoremmultiple} for $n=1$} \label{S4}
We now prove a local version of Theorem~\ref{maintheoremmultiple} for $n=1$. We abbreviate $\mu_1, d_1, E_1$ to $\mu, d, E$. Given $\delta \in (0, \infty)$ we define the local Hardy--Littlewood maximal operator as follows: for each $x\in \R$, we let 
\[
M_\delta g(x) \coloneqq \underset{r \in (0, \delta]}{\sup}
\, \intav{B(x,r)} |g|.
%\frac{\int_{x-r}^{x+r}|g|}{2r}.
\]
Denote also $C_f \coloneqq \{ x \in \R; M f(x) = f(x) \}$ and $C_{f, \delta} \coloneqq \{ x \in \R; M_\delta f(x) = f(x) \}$. Then, the following is true. 

\begin{proposition}\label{casen=1}
Let $f(x)=\mu([0,x])$ with $\mu$ being $d$-Ahlfors regular supported on a closed set $E\subset [0,1]$, with $d \in (0,1)$ and some constant $C \in (0,\infty)$. Then, for each $\delta \in (0,\infty)$ we have $m(f(C_{f,\delta})) = 0$. 
\end{proposition}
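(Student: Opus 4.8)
The plan is to bound $m\bigl(f(C_{f,\delta})\bigr)$ directly, by a recursive covering of $f(C_{f,\delta})$ by finitely many intervals whose total length decays geometrically; this is the mechanism behind Example~\ref{E1}, but with the ``arithmetic'' structure of $\mathcal C$ replaced by the gap structure forced by Ahlfors regularity. First I would make two harmless reductions. Since $M_\delta f\le M_{\delta'}f$ whenever $\delta\le\delta'$, the set $C_{f,\delta}$ shrinks as $\delta$ grows, so it is enough to treat $\delta\le 1$. Next, $\mu$ has no atoms (as $\mu(\{x\})\le Cr^d\to 0$), so $f$ is continuous and nondecreasing; moreover $f\equiv 0$ on $(-\infty,0]$ and $f\equiv\mu([0,1])$ on $[1,\infty)$, hence $f\bigl(C_{f,\delta}\setminus[0,1]\bigr)$ has at most two points and it suffices to cover $f\bigl(C_{f,\delta}\cap[0,1]\bigr)$. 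Note also $f(E)=f([\min E,\max E])=[0,\mu([0,1])]$.

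The core of the argument is a ``flat-left averaging'' device. Call $q\in E$ \emph{left-isolated} if there is $p<q$ with $(p,q)\cap E=\emptyset$, and write $g=q-p$. Since $\mu$ is supported on $E$, $f\equiv f(q)$ on $[p,q]$, and Ahlfors regularity at $q$ gives $\mu((q,q+s])=\mu([q-s,q+s])\ge C^{-1}s^d$ for $0<s\le g$. Fix $\rho\in(0,\min\{g,\delta\}]$ and $z\in(q,q+\rho/2]$, and average $f$ over the ball $B_z:=[\,2z-q-\rho,\ q+\rho\,]$: it is centered at $z$, has radius $\le\rho\le\delta$, its left half lies in $[p,q]$ where $f=f(q)$, and on its right half $\int_{(q,q+\rho]}(q+\rho-t)\,\dmu(t)\ge\tfrac{\rho}{2}\,\mu((q,q+\rho/2])\gtrsim\rho^{d+1}$. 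A short computation then gives $\intav{B_z}f\ge f(q)+c_1\rho^d$ for some $c_1=c_1(d,C)>0$, so $M_\delta f(z)\ge f(q)+c_1\rho^d$. Choosing $c_1$ small enough that $c_1\rho^d<\mu((q,q+\rho/2])$, every $y\in\bigl(f(q),f(q)+c_1\rho^d\bigr)$ is attained as $y=f(z)$ for some such $z$, and then $M_\delta f(z)>f(z)$. Hence the open interval $\bigl(f(q),\,f(q)+c_1\rho^d\bigr)$ is disjoint from $f(C_{f,\delta})$.

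I would then iterate this over ``cells'' $[a,b]$ with $a,b\in E$ and $\mu((a,b])>0$, so that $f([a,b])=[f(a),f(b)]$ is an interval of length $\mu((a,b])$. Start with the single cell $[\min E,\max E]$, whose image contains $f(C_{f,\delta}\cap[0,1])$. Given a cell $I$, put $p^\ast=\inf(E\cap I)$, $q^\ast=\sup(E\cap I)$. The key geometric input is that a $d$-Ahlfors regular set with $d<1$ has Lebesgue measure zero and, quantitatively, contains complementary gaps at every location and scale; exploiting this in the part of $E\cap I$ carrying half the $\mu$-mass of $I$ should produce a left-isolated point $q_I\in E\cap I$ whose left gap has length $g_I\ge c_2(d,C)\,\mu(I)^{1/d}$ and with $\mu((q_I,q^\ast])\ge\tfrac12\mu(I)$. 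Taking $\rho_I:=\min\{\,g_I,\ \delta,\ (\mu(I)/2c_1)^{1/d}\,\}\le g_I$ and applying the device, one excludes from $f(C_{f,\delta})$ an interval of length $c_1\rho_I^{\,d}$ which, by the two bounds, lies inside $(f(a),f(b))$ and has length $\ge c\,\mu(I)$ for some $c=c(d,C,\delta)>0$. Removing it splits $f(I)$ into at most two subintervals, which are again images of cells and serve as the children of $I$. Passing to the next generation of cells replaces the union $\bigcup f(I)$ by a set of measure at most $(1-c)$ times the previous one while still covering $f(C_{f,\delta}\cap[0,1])$; after $k$ generations the cover has measure $\le(1-c)^k\mu([0,1])$, and $k\to\infty$ gives $m\bigl(f(C_{f,\delta}\cap[0,1])\bigr)=0$.

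The step I expect to be the main obstacle is the geometric lemma just used: producing, uniformly over cells, a gap of the correct size that is correctly positioned --- with a definite proportion of the $\mu$-mass of $E\cap I$ remaining strictly to its right --- from Ahlfors regularity alone, together with the bookkeeping of constants needed so that the removed fraction $c$ stays bounded below. One must separate the case $g_I\asymp\mu(I)^{1/d}$ (take $\rho_I=g_I$) from that of an exceptionally large gap (truncate $\rho_I$), and when $\mu(I)$ is large fall back on $\mu(I)\le\mu([0,1])\le C$ to keep $c_1\rho_I^{\,d}\ge c\,\mu(I)$, which is where the dependence of $c$ on $\delta$ enters. This multiscale construction is the ``recursive geometric-type process'' to be set up carefully. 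A secondary point, already essential in Example~\ref{E1}, is that one must exclude an entire interval of $f$-values above $f(q_I)$ rather than just the value $f(q_I)$ itself; this is exactly what recentering the averaging ball from $q_I$ to a nearby $z$ accomplishes. (Combined with Proposition~\ref{justluzinproperty} this gives the $n=1$ case of Theorem~\ref{maintheoremmultiple}, since $C_f=\bigcap_{k\in\N}C_{f,k}$; the general $n$ is handled in Section~\ref{S5}.)
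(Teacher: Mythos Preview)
Your outline is correct, and the geometric lemma you flag as the obstacle is exactly the uniform porosity of $d$-Ahlfors regular sets with $d<1$: if every gap of $E$ inside a cell $[a,b]$ (with $a,b\in E$) had length below $\epsilon(b-a)$, then covering $[a,b]$ by $\lceil 1/\epsilon\rceil$ intervals of length $\epsilon(b-a)$, each meeting $E$, gives $\mu([a,b])\le 2C\epsilon^{d-1}(b-a)^d$, while any $x\in E$ near the center of $[a,b]$ yields $\mu([a,b])\ge C^{-1}((b-a)/4)^d$; these are incompatible for small $\epsilon$, so some gap has length $\gtrsim(b-a)\ge(\mu(I)/C)^{1/d}$. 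Running this inside $[a,m^\ast]$, with $m^\ast\in E$ the $\mu$-median of $I$, produces your correctly positioned $q_I$, and your three-way truncation of $\rho_I$ does give a uniform removed fraction $c=c(d,C,\delta)>0$. The paper's route is genuinely different: rather than locating \emph{one} large gap per cell, it uses \emph{all} complementary intervals $(a_i,b_i)\subset J$ at once, showing via Besicovitch and then Vitali covering (Lemmas~\ref{L1} and~\ref{L2}) that the disjointed family $\{(b_i,b_i+\tilde r_i)\}_{i\in\Lambda_J^\circ}$ already carries a fixed fraction of $\mu(J)$; Lemma~\ref{detachment2} (your averaging device, but centered at $b_i$ with radius $2r_i$ rather than at a moving $z$, and then combined with monotonicity of $M_\delta f$) excises the corresponding disjoint intervals from $f(C_{f,\delta})$. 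Your single-gap recursion is arguably more elementary for $n=1$; the paper's collective estimate is packaged so that it survives an additive lower-dimensional perturbation $\eta$ (note the extra hypothesis in Lemma~\ref{detachment2}), which is precisely what the inductive step in Section~\ref{S5} requires.
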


\noindent Notice that Proposition~\ref{casen=1} implies that Theorem~\ref{maintheoremmultiple} with $n=1$ is true. Indeed, we have $C_f \subset C_{f,1}$ so that $m(f(C_f)) \leq m(f(C_{f,1}))=0$ and the thesis follows by Proposition~\ref{justluzinproperty}.

Write $(0,1) \setminus E =\bigcup_{i \in \Lambda}(a_i,b_i)=\bigcup_{i \in \Lambda}(c_i-r_i,c_i+r_i)$ using some countable set $\Lambda \subset \N$. Given $\delta \in (0, \infty)$ and any interval $I \subset \R$, we have $m(f(C_{f,\delta} \cap I)) \leq m(f(I)) = 0$ whenever $I \cap E = \emptyset$. Since $[0,1]$ can be covered by finitely many intervals $I$ of length at most $\delta$, to prove Proposition~\ref{casen=1} it suffices to show $m(f(C_{f,\delta} \cap I)) = 0$ for each such $I$.  

%This can be done, since  For each such $I$, we write $I\setminus E=\cup_{i \in \Lambda}(a_i,b_i)=\cup_{i \in \Lambda}(c_i-r_i,c_i+r_i)$ for some $\Lambda \subset \N$. It suffices to show $m(f(C_f \cap I))=0$.

The following lemma is crucial for our purposes. 
\begin{lemma}\label{L1}
	For an open interval $J\subset [0,1]$, let $\Lambda_J \coloneqq \{i \in \Lambda; (a_i, b_i) \subset J\}$. Then
	\[
	\mu \Big( \bigcup_{i \in \Lambda_J} (b_{i},b_{i}+r_{i})\cap J \Big)\ge \frac{1}{2} C^{-2}4^{-d} \mu(J).
	\]
\end{lemma}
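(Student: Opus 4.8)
The plan is a Vitali‑type covering argument in the measure $\mu$. Write the gaps as $(a_i,b_i)=(c_i-r_i,c_i+r_i)$, and use throughout that $\mu$ is carried by $E$, is non‑atomic (single points are $\mu$‑null, since $\mu(B(x,\rho))\le C\rho^d\to0$), and has total mass $\mu(E)\le C$. \emph{Reduction.} If $\mu(J)=0$ there is nothing to prove, so assume $\mu(J)>0$ and set $p=\inf(E\cap J)$, $q=\sup(E\cap J)$; then $p,q\in E$ and $p<q$. Since $\mu$ is carried by $E$ and non‑atomic, $\mu(J)=\mu(E\cap[p,q])$, and because $p,q\in E$, every component $(a_i,b_i)$ of $(0,1)\setminus E$ meeting $(p,q)$ is contained in $(p,q)\subseteq J$, hence lies in $\Lambda_J$. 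If one such gap has $r^*\ge\frac15$, then $\mu((b^*,b^*+r^*))\ge C^{-1}(r^*)^d\ge C^{-1}5^{-d}\ge\tfrac12C^{-1}4^{-d}\ge\tfrac12C^{-2}4^{-d}\mu(J)$ (using $2\cdot4^d\ge 5^d$ for $d\in(0,1)$ and $\mu(J)\le\mu(E)\le C$), and we are done; so we may assume every gap in $(p,q)$ has $r_i<\frac15$, which keeps every ball below of radius $<1$.

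\emph{Pointwise estimate.} Fix a gap $(a_i,b_i)\subseteq(p,q)$. Since $(a_i,b_i)$ carries no $\mu$‑mass and its endpoints are $\mu$‑null, $\mu((b_i,b_i+r_i))=\mu(\overline{B(b_i,r_i)})\ge C^{-1}r_i^d$. On the other hand $(b_i,b_i+r_i)\subseteq\overline{B(c_i,2r_i)}\subseteq\overline{B(b_i,4r_i)}$, so $\mu(\overline{B(c_i,2r_i)})\le\mu(\overline{B(b_i,4r_i)})\le C(4r_i)^d\le C^2 4^d\,\mu((b_i,b_i+r_i))$. Hence
\[
\mu\big((b_i,b_i+r_i)\big)\ \ge\ C^{-2}4^{-d}\,\mu\big(\overline{B(c_i,2r_i)}\big).
\]

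\emph{Covering and conclusion.} The crucial input is that, over the gaps $i\subseteq(p,q)$, the closed intervals $\overline{B(c_i,2r_i)}$ form a Vitali cover of $E\cap[p,q]$ modulo a $\mu$‑null set: for $\mu$‑a.e.\ $x\in E$ there are gaps $i$ of arbitrarily small radius with $\operatorname{dist}(x,[a_i,b_i])\le r_i$, i.e.\ $x\in\overline{B(c_i,2r_i)}$. One first discards the $\mu$‑null set made of gap endpoints together with the at most countably many points approached by $E$ from one side only; for the remaining $x$ one checks, at each small scale $t$, that $(x-t,x+t)$ contains a gap of size comparable to $t$ within distance comparable to $t$ of $x$ (this is where Ahlfors regularity with $d<1$, in the guise of porosity of $E$, enters) and lets $t\to0$. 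Granting this cover, the Vitali covering theorem for the Radon measure $\mu$ supplies a countable pairwise disjoint subfamily $\{\overline{B(c_{i_k},2r_{i_k})}\}_k$ covering $\mu$‑a.e.\ of $E\cap[p,q]$; discarding in addition an arbitrarily thin strip $[q-\varepsilon,q]$ (of $\mu$‑mass $\le C(2\varepsilon)^d\to0$) we may assume every $(b_{i_k},b_{i_k}+r_{i_k})\subseteq J$. Since these right‑neighbours sit inside disjoint balls they are themselves disjoint, so by the pointwise estimate
\[
\mu\Big(\bigcup_{i\in\Lambda_J}(b_i,b_i+r_i)\cap J\Big)\ \ge\ \sum_k\mu\big((b_{i_k},b_{i_k}+r_{i_k})\big)\ \ge\ C^{-2}4^{-d}\sum_k\mu\big(\overline{B(c_{i_k},2r_{i_k})}\big)\ \ge\ C^{-2}4^{-d}\,\mu(E\cap J),
\]
which in fact exceeds the stated bound; the factor $\tfrac12$ is slack that absorbs the boundary gap near $q$ and the mismatch between the dilation factor $2$ above and the one a direct porosity argument actually controls (which depends on $C,d$ and is in general $>2$).

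\emph{Main obstacle.} The delicate step is the covering property. Ahlfors regularity controls measures of balls at once, but converting it into the geometric/combinatorial statement that $\mu$‑a.e.\ point of $E$ is closely shadowed, on the scale of the gap, by gaps of every small size — and with dilation constants clean enough to land on exactly $\tfrac12 C^{-2}4^{-d}$ rather than a porosity‑dependent constant — requires care. The awkward configurations are points approached by $E$ from one side only whose neighbouring gaps are narrow relative to their distance; controlling those is presumably precisely the role of the "recursive geometric‑type process" the authors allude to.
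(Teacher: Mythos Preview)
Your argument has a genuine gap, and you identify it yourself in the ``Main obstacle'' paragraph: the Vitali covering claim with dilation factor $2$ is asserted but never proved. Concretely, you need that for $\mu$-a.e.\ $x\in E$ and every $\varepsilon>0$ there is a gap $(a_i,b_i)$ with $r_i<\varepsilon$ and $\operatorname{dist}(x,[a_i,b_i])\le r_i$. Ahlfors regularity with $d<1$ does give uniform porosity of $E$, but only with a porosity constant $c=c(C,d)$; the resulting Vitali cover uses balls $\overline{B(c_i,(2/c)r_i)}$, not $\overline{B(c_i,2r_i)}$, and the constant you would obtain is $C^{-2}(2/c)^{-d}$ times further losses, not $\tfrac12 C^{-2}4^{-d}$. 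For Cantor-type sets built by removing very thin middle intervals, the nearest gap at scale $t$ can have radius much smaller than $t$, so the ratio $\operatorname{dist}(x,[a_i,b_i])/r_i$ is genuinely unbounded and the factor $2$ fails. Since the lemma states a specific constant, this is not a cosmetic issue; and even if only \emph{some} positive constant were needed downstream, the covering step is still not carried out.

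The paper's proof is organized quite differently and avoids porosity altogether. After reducing to the case where the endpoints of $J$ lie in $E$ and all right-neighbours $(b_i,b_i+r_i)$ stay inside $J$, one applies a Besicovitch selection to the intervals $(b_i-r_i,b_i+r_i)$ (overlap $\le 2$), and then argues entirely through \emph{Lebesgue} measure: since $m(E)=0$, the right halves $(c_i,b_i)$ of the gaps fill up half of $J$, so $\sum_{i\in\Lambda_J^\circ} r_i \ge \tfrac14 m(J)$. The crucial step is the elementary subadditivity $\sum r_i^d \ge \big(\sum r_i\big)^d$, which together with one Ahlfors lower bound at each $b_i$ gives $\mu\big(\bigcup (b_i,b_i+r_i)\big)\ge \tfrac{1}{2C}(m(J)/4)^d$. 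Finally a single Ahlfors upper bound at the endpoint $\alpha\in E$ converts $m(J)^d$ into $\mu(J)$, yielding exactly $\tfrac12 C^{-2}4^{-d}\mu(J)$. No Vitali-in-$\mu$ cover and no porosity constant enter.

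One more remark: the ``recursive geometric-type process'' mentioned in the introduction refers to the iteration in the proof of the proposition (repeatedly applying the lemma on smaller and smaller subintervals to drive the measure of $f(C_{f,\delta}\cap I)$ to zero), not to the proof of this lemma.
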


\begin{proof}
	%We assume that $\mu(J) > 0$, since the case $\mu(J) = 0$ is trivial. 
	Let us write $\overline{J}=[\alpha,\beta]$.
	If $\mu(J) = 0$, then the thesis is trivial. If $\mu(J) > 0$, then we can find $\tilde J \subset J$ whose endpoints belong to $E$ and such that $\mu(\tilde J) = \mu(J)$. Thus, we can assume that the endpoints of $J$ belong to $E$. 
	
	First, if $b_{i} + r_{i} > \beta$ for some $i \in \Lambda_J$, then
	$
%	\mu \big(\cup_{i \in \Lambda_J} (b_{i},b_{i}+r_{i})\cap (a_{i_0}, \beta) \big)
%	\geq 
	\mu((b_{i}, b_{i} + r_{i}) \cap (a_{i}, \beta))=\mu((a_{i}, \beta))
	\geq \tfrac{1}{2} C^{-2}4^{-d} \mu((a_{i}, \beta)).
	$
	Thus, it suffices to verify the thesis with $J$ replaced by $(\alpha, \tilde \beta)$, where $ \tilde \beta = \min_{i \in \Lambda_J; b_i + r_i > \beta} a_i \in E$ (in this case, it is easy to show that the smallest $a_i$ indeed exists). We repeat this procedure recursively until it stabilizes (after finitely or infinitely many steps) at some point $\beta_0 \in [\alpha, \beta]$. If $\beta_0 = \alpha$ or $\mu((\alpha, \beta_0))=0$, then we obtain the thesis right away. Otherwise, we end up with $(\alpha, \beta_0) \subset J$ whose endpoints belong to $E$ (if there was infinitely many steps, then we use the fact that $E$ is closed), satisfying $\mu((\alpha, \beta_0)) >0$ and $b_i + r_i \leq \beta_0$ for all $i \in \Lambda_{(\alpha, \beta_0)}$. For $(\alpha, \beta_0)$, called $J$ for simplicity later on, the proof goes as follows. 
	
	Given $J$ as above we choose $\Lambda_J^{\circ} \subset \Lambda_J$ with the Besicovitch covering properties $\bigcup_{i \in \Lambda_J} (b_i - r_i, b_i + r_i) \subset \bigcup_{i \in \Lambda^{\circ}_J} (b_i - r_i, b_i + r_i)$ and $\| \sum_{i \in \Lambda^{\circ}_J} \chi_{(b_i - r_i, b_i + r_i)} \|_{L^\infty} \leq 2$. Then
	\begin{align*}
	\mu \Big(\bigcup_{i \in \Lambda_J} (b_{i},b_{i}+r_{i})\cap J\Big)
	&\geq \mu \Big(\bigcup_{i \in \Lambda^{\circ}_J} (b_{i},b_{i}+ {r}_{i}) \Big)
	= \mu \Big(\bigcup_{i \in \Lambda^{\circ}_J} (b_{i} - {r}_{i},b_{i}+ {r}_{i}) \Big)\\
	&\geq \frac{1}{2}\sum_{i \in \Lambda^{\circ}_J}\mu((b_i,b_i+r_i))\geq \frac{1}{2C} \sum_{i \in \Lambda^{\circ}_J} {r}_{i}^d
	\geq \frac{1}{2C} \Big(\sum_{i \in \Lambda^{\circ}_J} {r}_{i}\Big)^d.
	\end{align*}
	Also, we have
	\begin{align*}
	\sum_{i \in \Lambda^{\circ}_J} {r}_{i}
	\geq \frac{1}{2} m \Big( \bigcup_{i \in \Lambda^{\circ}_J} (b_i - r_i, b_i + r_i) \Big)
	&= \frac{1}{2} m \Big( \bigcup_{i \in \Lambda_J} (b_i - r_i, b_i + r_i) \Big)
	\\
	&\geq \frac{1}{2} m \Big( \bigcup_{i \in \Lambda^{\circ}_J} (c_i, b_i) \Big)
	= \frac{1}{4} m(J).
	\end{align*}
	Combining both estimates completes the proof, since
	\[
	\mu \Big(\bigcup_{i \in \Lambda_J} (b_{i},b_{i}+r_{i})\cap J \Big)
	\geq \frac{1}{2C} \Big(\sum_{i \in \Lambda^{\circ}_J} {r}_{i}\Big)^d
	\geq \frac{1}{2C} \Big(\frac{m(J)}{4}\Big)^d \geq \frac{\mu(J)}{2C^2 4^d}, 
	\]
	where the last inequality follows because $\alpha \in E$ and $J \subset (\alpha - m(J), \alpha + m(J))$. 
\end{proof}
The next lemma provides a disjoint version of the previous one.
\begin{lemma}\label{L2}
	Let $J, \Lambda_J$ be as in Lemma~\ref{L1} and for each $i \in \Lambda_J$ set $\tilde r_i \coloneqq \min\{r_i, \beta - b_i\}$. Then
	$\mu \big(\bigcup_{i \in \Lambda_J^\circ} (b_{i} - \tilde r_{i},b_{i}+\tilde r_{i}) \big) \ge \frac{1}{2} C^{-4}12^{-d} \mu(J)$ holds for some $\Lambda_J^\circ \subset \Lambda_J$ such that the intervals $(b_i, b_i + \tilde r_i)$, $i \in \Lambda_J^\circ$, are disjoint.
\end{lemma}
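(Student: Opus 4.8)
The plan is to deduce the statement from Lemma~\ref{L1} by thinning the family $\{(b_i,b_i+\tilde r_i)\}_{i\in\Lambda_J}$ to a pairwise disjoint subfamily via a Vitali-type covering argument, exploiting that each $(a_i,b_i)$ is a gap of $E$ to keep all of the relevant $\mu$-mass on the right halves of these intervals.

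First I would dispose of the trivial cases ($\mu(J)=0$; indices with $\tilde r_i=0$, i.e.\ $b_i=\beta$, which contribute nothing). Writing $\overline J=[\alpha,\beta]$ and noting $\alpha\le a_i<b_i\le\beta$ for $i\in\Lambda_J$, one has the elementary identity $(b_i,b_i+r_i)\cap J=(b_i,b_i+\tilde r_i)$, so Lemma~\ref{L1} gives
\[
\mu\Big(\bigcup_{i\in\Lambda_J}(b_i,b_i+\tilde r_i)\Big)\ \ge\ \tfrac12 C^{-2}4^{-d}\,\mu(J).
\]
The structural point is that $(b_i-\tilde r_i,b_i]\subset(a_i,b_i]$ is $\mu$-null for every $i\in\Lambda_J$: indeed $(a_i,b_i)$ contains no point of $E$, and singletons are $\mu$-null by $d$-Ahlfors regularity. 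Consequently $\mu((b_i,b_i+\tilde r_i))=\mu((b_i-\tilde r_i,b_i+\tilde r_i))\ge C^{-1}\tilde r_i^{\,d}$ (Ahlfors lower bound at $b_i\in E$, with $\tilde r_i\le\tfrac12<1$), while $\mu((b_i-2\tilde r_i,b_i+3\tilde r_i))\le C\,(3\tilde r_i)^{d}$ (Ahlfors upper bound at $b_i$ when $3\tilde r_i<1$, and the crude bound $\mu(\R)=\mu([0,1])\le C\le C(3\tilde r_i)^d$ otherwise). Combining the two,
\[
\mu\big((b_i-2\tilde r_i,b_i+3\tilde r_i)\big)\ \le\ C^2\,3^{d}\,\mu\big((b_i,b_i+\tilde r_i)\big),\qquad i\in\Lambda_J.
\]

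Next I would apply the standard $5r$-covering (Vitali) lemma to the intervals $I_i=(b_i,b_i+\tilde r_i)$, $i\in\Lambda_J$ (which have uniformly bounded lengths), to obtain $\Lambda_J^\circ\subset\Lambda_J$ such that the $I_i$, $i\in\Lambda_J^\circ$, are pairwise disjoint and $\bigcup_{i\in\Lambda_J}I_i\subset\bigcup_{i\in\Lambda_J^\circ}5I_i$, where $5I_i$ is the dilate of $I_i$ about its midpoint, i.e.\ $5I_i=(b_i-2\tilde r_i,b_i+3\tilde r_i)$. Then, using this containment, subadditivity of $\mu$, and the last display,
\[
\tfrac12 C^{-2}4^{-d}\,\mu(J)\ \le\ \sum_{i\in\Lambda_J^\circ}\mu\big((b_i-2\tilde r_i,b_i+3\tilde r_i)\big)\ \le\ C^2 3^{d}\sum_{i\in\Lambda_J^\circ}\mu\big(I_i\big),
\]
so $\sum_{i\in\Lambda_J^\circ}\mu(I_i)\ge\tfrac12 C^{-4}12^{-d}\mu(J)$. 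Since the $I_i$, $i\in\Lambda_J^\circ$, are disjoint and $I_i\subset(b_i-\tilde r_i,b_i+\tilde r_i)$, the left-hand side of the asserted inequality is at least $\mu\big(\bigcup_{i\in\Lambda_J^\circ}I_i\big)=\sum_{i\in\Lambda_J^\circ}\mu(I_i)\ge\tfrac12 C^{-4}12^{-d}\mu(J)$, which is precisely the claim (and the choice $5r$ is what produces the factor $(4\cdot 3)^{-d}=12^{-d}$).

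The main obstacle is the mismatch of directions: one wants a \emph{lower} bound on the measure of a union, whereas the covering argument only yields an \emph{upper} bound on $\mu(\bigcup_i I_i)$ through a sum of measures of the dilates $5I_i$. The gap structure of $E$ is exactly what closes the loop — it forces $\mu((b_i,b_i+\tilde r_i))\ge C^{-1}\tilde r_i^{\,d}$ (all of the Ahlfors mass of the symmetric interval around $b_i$ sits in the right half), so the dilation can be reversed, and simultaneously it makes the disjoint right halves carry enough mass that their disjoint union alone — whose $\mu$-measure is the plain sum — already dominates a fixed fraction of $\mu(J)$. The only further point needing a word is that a dilate $5I_i$ may have radius exceeding $1$ (possible only when $m(J)$ is large), where one simply falls back on $\mu([0,1])\le C$ in place of the Ahlfors upper bound.
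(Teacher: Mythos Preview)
Your proof is correct and follows essentially the same route as the paper: invoke Lemma~\ref{L1}, apply a Vitali-type selection, and use Ahlfors regularity at $b_i\in E$ together with $\mu((a_i,b_i])=0$ to compare dilates with the selected intervals. The only cosmetic difference is that the paper runs Vitali on the symmetric intervals $(b_i-\tilde r_i,b_i+\tilde r_i)$ with the $3r$ version (disjointness of the symmetric intervals then trivially gives disjointness of the right halves), while you run the $5r$ version directly on the right halves $(b_i,b_i+\tilde r_i)$; in both cases the comparison step bounds $\mu$ of the dilate by $C^{2}3^{d}\mu((b_i-\tilde r_i,b_i+\tilde r_i))$, producing the identical constant $\tfrac12 C^{-4}12^{-d}$.
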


\begin{proof} 
	Assuming $\mu(J) > 0$ we choose $\Lambda_J^\circ \subset \Lambda_J$ with the Vitali covering properties 
	$\bigcup_{i \in \Lambda_J} (b_i - \tilde r_i, b_i + \tilde r_i) \subset \bigcup_{i \in \Lambda^{\circ}_J} (b_i - 3 \tilde r_i, b_i + 3 \tilde r_i)$ and $\| \sum_{i \in \Lambda^{\circ}_J} \chi_{(b_i - \tilde r_i, b_i + \tilde r_i)} \|_{L^\infty} = 1$.
	Then, by applying Lemma~\ref{L1},
	\[
	\frac{\mu(J)}{2C^2 4^d}
	\le \mu \Big(\bigcup_{i \in \Lambda_J}^{\infty} (b_{i},b_{i}+ \tilde r_{i}) \Big)
	\le \mu \Big(\bigcup_{i \in \Lambda^{\circ}_J}^{\infty} (b_{i}-3 \tilde r_{i},b_{i}+3 \tilde r_{i}) \Big)
	\le C^23^d \mu \Big(\bigcup_{i \in \Lambda^{\circ}_J}^{\infty} (b_{i}-\tilde r_{i},b_{i}+ \tilde r_{i}) \Big),
	\]
	where in the last inequality disjointness and the Ahlfors regularity condition were used.
\end{proof}
In the next lemma, properties of $M_\delta$ are used to construct gaps in the image of $C_{f,\delta}.$ We write a more general version of what is needed to prove Proposition~\ref{casen=1}, having in mind Theorem~\ref{maintheoremmultiple}.
\begin{lemma}\label{detachment2}
Let $J, \Lambda_J$ be as in Lemma~\ref{L1}, and assume that $m(J) \leq \delta$. 
%Let $\eta_1, \eta_2$ be measures supported on compact sets $A_1, A_2\subset [0,1]$. Fix an interval $J=(\alpha,\beta)$ of length at most $\frac{\delta}{3}$ and such that $\alpha, \beta \in A_1$. 
Moreover, suppose that
\[
\mu([b_i,b_i+r_i]) \ge 4 \eta([b_i-2r_i,b_i])
\] 
for all $i \in \Lambda_J$, where $\mu$ is as before and $\eta$ is some other finite nonnegative Borel measure supported on $[0,1]$. Then, for the function $g(x) = (\mu+\eta)([0,x])$, we have  
\[
\bigcup_{i \in \Lambda_J} \big(g(b_i),g(b_i)+ \tfrac{1}{8} \mu ([b_i,b_i+r_i])\big) \subset g(\{x\in J; M_{\delta} g(x)=g(x)\})^{\sf c}.
\]
\end{lemma}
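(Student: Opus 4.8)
The plan is to use monotonicity of $g$ to reduce the set-theoretic inclusion to a pointwise statement, and then, for each relevant point $x$, to exhibit a single admissible ball whose average of $g$ strictly exceeds $g(x)$. Since $g$ is nondecreasing, the asserted inclusion is equivalent to: for each $i \in \Lambda_J$ and each $y \in \big(g(b_i),\, g(b_i) + \tfrac18\mu([b_i,b_i+r_i])\big)$, every $x \in J$ with $g(x) = y$ satisfies $M_\delta g(x) > g(x)$. So I fix such $i$, $y$, $x$, and must produce one radius $s \in (0,\delta]$ with $\intav{B(x,s)} g > g(x)$.

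\emph{Step 1: locating $x$.} Recall that $a_i = b_i - 2r_i$, that $\mu$ is $d$-Ahlfors regular (hence atomless), and that $\mu$ vanishes on the gap $(a_i,b_i)$. Then $g(b_i+r_i) - g(b_i) \ge \mu\big((b_i,b_i+r_i]\big) = \mu([b_i,b_i+r_i])$, so $g(b_i) < y < g(b_i+r_i)$, and monotonicity of $g$ forces $x \in (b_i, b_i+r_i)$.

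\emph{Step 2: the ball.} Take $s := x - a_i$. From $b_i < x < b_i + r_i$ and $a_i = b_i - 2r_i$ one gets $2r_i < s < 3r_i$; and since $a_i \in \overline J$, $x \in J$ and $m(J)\le\delta$, one gets $s \le m(J) \le \delta$, so $B(x,s) = (a_i,\, 2x-a_i)$ is admissible for $M_\delta$. Geometrically this ball stops on the left exactly at the gap endpoint $a_i$, so to the left of $x$ it meets only the controlled mass $\eta((a_i,b_i])$ (there is no $\mu$-mass there), while on the right it reaches past $b_i + 2r_i > b_i + r_i$, hence past the whole jump $\mu([b_i,b_i+r_i])$.

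\emph{Step 3: the average beats $g(x)$.} I split
\[
\int_{a_i}^{2x-a_i}\!\big(g - g(x)\big) \;=\; \int_{a_i}^{x}\!\big(g-g(x)\big) \;+\; \int_{x}^{2x-a_i}\!\big(g-g(x)\big).
\]
Monotonicity gives $\int_{a_i}^{x}\big(g-g(x)\big) \ge -s\,\big(g(x)-g(a_i)\big)$, while
\[
g(x) - g(a_i) = \eta\big((a_i,b_i]\big) + (\mu+\eta)\big((b_i,x]\big) \le \tfrac14\mu([b_i,b_i+r_i]) + \tfrac18\mu([b_i,b_i+r_i]) = \tfrac38\mu([b_i,b_i+r_i]),
\]
using $\mu((a_i,b_i]) = 0$, the hypothesis $\eta([b_i-2r_i,b_i]) \le \tfrac14\mu([b_i,b_i+r_i])$, and $g(x)-g(b_i) = y - g(b_i) < \tfrac18\mu([b_i,b_i+r_i])$. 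For the second integral, $g - g(x) \ge 0$ on $(x, 2x-a_i)$, and on the subinterval $[b_i+r_i, 2x-a_i] \subset (x, 2x-a_i)$, whose length is $2s-3r_i$, we have $g - g(x) \ge g(b_i+r_i) - g(x) > \tfrac78\mu([b_i,b_i+r_i])$ (again by the bound on $g(x)-g(b_i)$ together with $g(b_i+r_i)-g(b_i)\ge\mu([b_i,b_i+r_i])$). Hence
\[
\int_{a_i}^{2x-a_i}\!\big(g-g(x)\big) > \Big({-}\tfrac38 s + \tfrac78(2s - 3r_i)\Big)\,\mu([b_i,b_i+r_i]) = \tfrac18(11s - 21r_i)\,\mu([b_i,b_i+r_i]) > 0,
\]
since $s > 2r_i$ (and $\mu([b_i,b_i+r_i])>0$, else the interval in the statement is empty). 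As $B(x,s)$ has length $2s$ and $g\ge 0$, this gives $M_\delta g(x) \ge \intav{B(x,s)} g > g(x)$, completing the argument.

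\emph{Main obstacle.} The delicate point is the calibration of $s$: it must be large enough that $B(x,s)$ swallows the jump $\mu([b_i,b_i+r_i])$ lying just to the right of $b_i$, yet we must stop exactly at $a_i$ on the left so as not to collect mass we cannot control, and the $\eta$-bookkeeping must close with room to spare — which is precisely why the two quantitative inputs of the lemma (the factor $4$ in $\mu([b_i,b_i+r_i]) \ge 4\eta([b_i-2r_i,b_i])$ and the width $\tfrac18\mu([b_i,b_i+r_i])$ of the target interval) are shaped as they are. The remaining points — $\delta$-admissibility of $B(x,s)$ and atomlessness of $\mu$ — are routine.
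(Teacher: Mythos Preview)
Your proof is correct, but it takes a different route from the paper's. The paper works only at the single point $b_i$: it shows $M_\delta g(b_i)\ge g(b_i)+\tfrac18\mu([b_i,b_i+r_i])$ by averaging $g$ over the fixed ball $[b_i-2r_i,b_i+2r_i]$ (splitting it at $b_i$ and $b_i+r_i$), and then concludes in one line by invoking that both $g$ and $M_\delta g$ are nondecreasing, so no point of $\{x\in J:M_\delta g(x)=g(x)\}$ can have $g$-value in $(g(b_i),M_\delta g(b_i))$. You instead locate each individual $x$ with $g(x)$ in the target interval inside $(b_i,b_i+r_i)$ and test the $x$-dependent ball $(a_i,2x-a_i)$. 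Your approach trades the monotonicity of $M_\delta g$ (which the paper uses silently) for a longer but self-contained computation; the paper's approach is shorter because one estimate at $b_i$ suffices. A minor cosmetic point: your inclusion $[b_i+r_i,2x-a_i]\subset(x,2x-a_i)$ should have the right endpoint half-open, but this is harmless for the integral bound.
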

\begin{proof}
Observe that for any $i \in \Lambda_J$ we have $r_i \leq \frac{\delta}{2}$, since $m(J) \leq \delta$. Thus,
\[
M_\delta g(b_i) \ge \intav{[b_i-2r_i,b_i+2r_i]}g\\
\ge \frac{\int_{[b_i-2r_i,b_i]}g}{4r_i}+\frac{\int_{[b_i,b_i+2r_i]}g}{4r_i}.
\]
Next, notice that 
\[
\frac{\int_{[b_i,b_i+2r_i]}g}{4r_i}\ge \frac{g(b_i)}{4}+\frac{g(b_i+r_i)}{4}\ge \frac{g(b_i)}{4}+\frac{g(b_i)+\mu([b_i,b_i+r_i])}{4}.
\]
On the other hand, by our assumption,
\[
\frac{\int_{[b_i-2r_i,b_i]}|g|}{4r_i}
\ge \frac{g(b_i-2r_i)}{2}=\frac{g(b_i)}{2}-\frac{\eta([b_i-2r_i,b_i])}{2}
\ge \frac{g(b_i)}{2} - \frac{\mu[b_i,b_i+r_i]}{8} .
\] 
Combining these inequalities gives
\begin{align*}
M_{\delta}g(b_i)\ge %g(n_j)+\eta_{1}([n_j,n_j+r_j])(\tfrac{1}{4}-\tfrac{\varepsilon}{2})\ge
g(b_i)+\tfrac{1}{8} \mu([b_i,b_i+r_i]),    
\end{align*}
from where the thesis follows, since both $g, M_\delta g$ are increasing.
\end{proof}

%\begin{lemma}\label{L4}
%Let $(c,d) \subset [0,1]$. If $d \in C_f$ and $(c,d) \subset \mathbb R \setminus C_f$, then $d\in E$.
%\end{lemma}
%\begin{proof}
%Assume that $d\in (a_i,b_i)$ for some $i \in \Lambda$. Then $f(y) \leq Mf(y)\le Mf(d)=f(d)=f(y)$ for all $y\in (a_i,d)$. Therefore $(a_i, d) \subset C_f$ and we reach a contradiction with $(c,d) \subset \mathbb R \setminus C_f$. 
%\end{proof}

\begin{lemma}\label{L5}
 Let $J, \Lambda_J^\circ, \tilde r_i$ be as in Lemma~\ref{L2}. Then, the intervals $\big(f(b_i), f(b_i)+\frac{1}{8}\mu((b_i,b_i + \tilde r_i))\big)$, $i \in \Lambda_J^\circ$, are disjoint and contained in $f(J)$.
\end{lemma}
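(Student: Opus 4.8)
The plan is to reduce both assertions to the monotonicity and continuity of $f$. Recall that $\mu$, being $d$-Ahlfors regular with $d\in(0,1)$ and supported on $E$, has no atoms; hence $f(x)=\mu([0,x])$ is continuous and nondecreasing, and $f(v)-f(u)=\mu((u,v))$ for all $u<v$. Indices $i\in\Lambda_J^\circ$ with $\mu((b_i,b_i+\tilde r_i))=0$ (in particular those with $\tilde r_i=0$) contribute the empty interval, so I would discard them at the outset. For the remaining $i$ one has $\tilde r_i>0$, and since $\tilde r_i=\min\{r_i,\beta-b_i\}\le\beta-b_i$ while $(a_i,b_i)\subset J=(\alpha,\beta)$ forces $\alpha\le a_i<b_i$, it follows that $(b_i,b_i+\tilde r_i)\subset(\alpha,\beta)=J$. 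Recall also that, by Lemma~\ref{L2}, the intervals $(b_i,b_i+\tilde r_i)$, $i\in\Lambda_J^\circ$, are pairwise disjoint.

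For the containment in $f(J)$, I would note that $\tfrac{1}{8}\mu((b_i,b_i+\tilde r_i))\le\mu((b_i,b_i+\tilde r_i))=f(b_i+\tilde r_i)-f(b_i)$, so it suffices to check $(f(b_i),f(b_i+\tilde r_i))\subset f(J)$. Given $v$ strictly between $f(b_i)$ and $f(b_i+\tilde r_i)$, the intermediate value theorem applied to the continuous function $f$ on $[b_i,b_i+\tilde r_i]$ produces $x$ with $f(x)=v$; since $v$ equals neither $f(b_i)$ nor $f(b_i+\tilde r_i)$, one has $x\in(b_i,b_i+\tilde r_i)\subset J$, whence $v\in f(J)$.

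For disjointness, I would take distinct $i,j\in\Lambda_J^\circ$: as $(b_i,b_i+\tilde r_i)$ and $(b_j,b_j+\tilde r_j)$ are disjoint, one lies entirely to the left of the other, say $b_i+\tilde r_i\le b_j$. Then monotonicity of $f$ gives
\[
f(b_i)+\tfrac{1}{8}\mu((b_i,b_i+\tilde r_i))\le f(b_i)+\mu((b_i,b_i+\tilde r_i))=f(b_i+\tilde r_i)\le f(b_j),
\]
so the right endpoint of the $i$-th interval does not exceed the left endpoint of the $j$-th, and the two open intervals are disjoint. I do not expect a real obstacle here; the only points deserving care are that $\mu$ is non-atomic (so that $f$ is continuous and the identity $f(v)-f(u)=\mu((u,v))$ holds with these open endpoints) and the routine handling of the degenerate indices.
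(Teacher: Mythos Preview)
Your proof is correct and follows essentially the same approach as the paper: both arguments derive containment from $(b_i,b_i+\tilde r_i)\subset J$ together with the continuity and monotonicity of $f$, and derive disjointness from the fact that the intervals $(b_i,b_i+\tilde r_i)$ are disjoint, so that $b_{i}+\tilde r_{i}\le b_{j}$ (equivalently $(b_i,b_i+\tilde r_i)\subset(b_i,b_j)$) when $b_i<b_j$. The only cosmetic difference is that the paper invokes Ahlfors regularity to obtain the strict inequality $f(b_j)>f(b_i)+\tfrac{1}{8}\mu((b_i,b_i+\tilde r_i))$, whereas you correctly observe that the weak inequality already suffices for disjointness of the open intervals and simply discard the degenerate indices up front.
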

\begin{proof}
The second part is clear because each interval $(b_i, b_i + \tilde r_i)$ is contained in $J$. Regarding the first part,	observe that given $i_1, i_2 \in \Lambda_J^\circ$, if $b_{i_1} < b_{i_2}$, then 
$
f(b_{i_2})-f(b_{i_1})= \mu((b_{i_1}, b_{i_2})) \ge \mu((b_{i_1}, b_{i_1} + \tilde r_{i_1})),
$ 
since the intervals $(b_{i}, b_{i} + \tilde r_{i})$, $i \in \Lambda_J^\circ$, are disjoint. The measure of the last interval is positive in view of the regularity condition and $\mu((a_{i_1}, b_{i_1}))=0$. Thus,
$f(b_{i_2}) > f(b_{i_1}) + \frac{\mu((b_{i_1}, b_{i_1} + \tilde r_{i_1}))}{8}$, 
as desired.
\end{proof}

\begin{proof}[Proof of Proposition~\ref{casen=1}]
We take an interval $I$ such that $m(I) \leq \delta$, and show $m(f(C_{f,\delta} \cap I)) = 0$.
 
First, observe that $m(f(C_{f,\delta} \cap I)) \leq m(f(I)) = \mu(I)$. Denote $K \coloneqq \frac{1}{32}C^{-4}12^{-d} \in (0,1)$. By Lemma~\ref{L2} (with $J = I$) we can find a finite set $\Lambda^\circ \subset \Lambda$ such that $(b_i,b_i+\tilde{r_i})$, $i \in \Lambda^\circ$, are disjoint, and $\sum_{i \in \Lambda^\circ} \mu((b_i,b_i+\tilde r_i)) \ge 8K \mu(I)$.
Then by Lemmas~\ref{detachment2} (with $J = I$ and $\eta = 0$) and~\ref{L5} (with $J = I$) we obtain
\[
B_i \coloneqq \big( f(b_i), f(b_i)+\tfrac{1}{8}\mu((b_i,b_i+\tilde r_i)) \big) \subset f(I) \setminus f(C_{f,\delta} \cap I)
\]
and, consequently,
\[
m \Big(\bigcup_{i \in \Lambda^\circ} B_i \cap f(I) \Big) \ge \sum_{i \in \Lambda^\circ} \tfrac{1}{8} \mu((b_i,b_i+\tilde r_i)) \ge K\mu(I) = K m(f(I)).
\]
We let $I_{1},\dots, I_{N_1}$ be the intervals that compose $[0,1] \setminus \cup_{i \in \Lambda^\circ} f^{-1}(B_i)$. Then, since $f$ is increasing,
\[
m ( f(C_{f,\delta} \cap I) ) \leq \sum_{n_1=1}^{N_1} m (f(I_{n_1} )) \le (1-K) m(f(I)).
\]
Next, we proceed analogously replacing $I$ with each of the sets $I_{n_1}$. As a result we obtain  
\[
m(f(C_{f,\delta} \cap I)) \leq \sum_{n_1=1}^{N_1} \sum_{n_2 = 1}^{N_{2}(n_1)} m (f(I_{n_1,n_2})) \leq \sum_{n_1=1}^{N_1} (1-K) m(f(I_{n_1})) \le (1-K)^2 m(f(I))
\]
with certain appropriately constructed intervals $I_{n_1,n_2} \subset I_{n_1}$. Repeating this procedure recursively, we deduce that $m(f(C_{f,\delta} \cap I)) \le (1-K)^L m(f(I))$ for each $L \in \mathbb N$ so that $m(f(C_{f,\delta} \cap I)) = 0$ indeed holds.
\end{proof}

\section{Proof of Theorem \ref{maintheoremmultiple}} \label{S5}
We proceed inductively to prove Theorem~\ref{maintheoremmultiple} in the general case. Precisely, Theorem~\ref{maintheoremmultiple} is a consequence of Proposition~\ref{casen=1}, corresponding to $n=1$, and the following result which enables the inductive step. 
\begin{proposition} \label{induction}
	For $n \in \N$, let $\{\mu_i\}_{i=1}^{n+1}$ be a collection of $d_i$-Ahlfors regular measures, $0 < d_{n+1} < d_n < \cdots < d_1 < 1$, associated with closed sets $E_i\subset [0,1]$. Assume that for $f_n(x) = (\mu_1 + \cdots + \mu_n)([0,x])$ and all $\delta \in (0, \infty)$ one has $m(f_n(C_{f_n, \delta})) = 0$. Then for $f_{n+1}(x) = (\mu_1 + \cdots + \mu_{n+1})([0,x])$ and all $\delta \in (0, \infty)$ one has $m(f_{n+1}(C_{f_{n+1}, \delta})) = 0$. In particular, $m(f_{n+1}(C_{f_{n+1}})) = 0$ and Theorem~\ref{maintheoremmultiple} holds for $f = f_{n+1}$ by Proposition~\ref{justluzinproperty} applied with $g = f_{n+1}$.
\end{proposition}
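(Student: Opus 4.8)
The plan is to run the argument of Proposition~\ref{casen=1} with $\mu_{n+1}$ — the most singular of the measures, since $d_{n+1}$ is the smallest exponent — in the role of the detaching measure, treating $\nu:=\mu_1+\cdots+\mu_n$ as a perturbation handled through the inductive hypothesis. Write $g(x):=\mu_{n+1}([0,x])$, so $f_{n+1}=f_n+g$, let $C_i$ denote the Ahlfors constant of $\mu_i$, and recall that $m(f_{n+1}(B))=(\mu_1+\cdots+\mu_{n+1})(B)$ and $m(f_n(B))=\nu(B)$ for every Borel $B$, these being distribution functions. As in Section~\ref{S4}, and using that $C_{f_{n+1},\delta_2}\subset C_{f_{n+1},\delta_1}$ when $\delta_1\le\delta_2$, it suffices to prove $m(f_{n+1}(C_{f_{n+1},\delta}\cap I))=0$ for a fixed small $\delta$ and every interval $I$ with $m(I)\le\delta$. \textbf{Step 1: the inductive input.} If $x\in C_{f_{n+1},\delta}$ and $0<r\le\delta$, then monotonicity of $g$ gives $\tfrac1{2r}\int_{B(x,r)}g\ge g(x-r)$, whence
\[
\tfrac1{2r}\int_{B(x,r)}f_n-f_n(x)\le g(x)-\tfrac1{2r}\int_{B(x,r)}g\le g(x)-g(x-r)=\mu_{n+1}((x-r,x]).
\]
Thus $\mu_{n+1}((x-\delta,x])<\varepsilon$ forces $x\in C^\varepsilon_{f_n,\delta}:=\{y:M_\delta f_n(y)\le f_n(y)+\varepsilon\}$. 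As $\varepsilon\downarrow0$ these sets decrease to $C_{f_n,\delta}$, so by continuity of measure and the inductive hypothesis $\nu(C^\varepsilon_{f_n,\delta})\downarrow\nu(C_{f_n,\delta})=m(f_n(C_{f_n,\delta}))=0$; moreover $\mu_{n+1}(\{y:\mu_{n+1}((y-\delta,y])<\varepsilon\})$ decreases to $\mu_{n+1}(\{y\in E_{n+1}:(y-\delta,y)\cap E_{n+1}=\emptyset\})=0$ (a finite set). Since $m(f_{n+1}(\,\cdot\,))=\nu(\,\cdot\,)+\mu_{n+1}(\,\cdot\,)$, we get $m\big(f_{n+1}(C_{f_{n+1},\delta}\cap I\cap\{\mu_{n+1}((\cdot-\delta,\cdot])<\varepsilon\})\big)\to0$ as $\varepsilon\to0$, and it remains to treat $C_{f_{n+1},\delta}\cap I\cap H_\varepsilon$, where $H_\varepsilon:=\{x:\mu_{n+1}((x-\delta,x])\ge\varepsilon\}$, a set lying in the $\delta$-neighbourhood of $E_{n+1}$.

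\textbf{Step 2: refined detachment near $E_{n+1}$.} Write $(0,1)\setminus E_{n+1}=\bigcup_{i\in\Lambda}(a_i,b_i)=\bigcup_{i\in\Lambda}(c_i-r_i,c_i+r_i)$. Here the ordering of the $d_i$ enters: since $b_i\in E_{n+1}$ and $(a_i,b_i)$ carries no $\mu_{n+1}$-mass, Ahlfors regularity gives $\mu_{n+1}([b_i,b_i+r_i])=\mu_{n+1}([b_i-r_i,b_i+r_i])\ge C_{n+1}^{-1}r_i^{d_{n+1}}$, while $\nu([b_i-2r_i,b_i])=\nu([a_i,b_i])\le\big(\sum_{j\le n}2C_j\big)r_i^{d_n}$ because $[a_i,b_i]$ has length $2r_i$ and $d_j\ge d_n$ for $j\le n$. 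As $d_{n+1}<d_n$, there is $\rho_0>0$ (depending only on the Ahlfors data) such that every gap with $r_i\le\rho_0$ satisfies the hypothesis $\mu_{n+1}([b_i,b_i+r_i])\ge4\,\nu([b_i-2r_i,b_i])$ of Lemma~\ref{detachment2} with $\mu=\mu_{n+1}$ and $\eta=\nu$. Fixing $\delta\le2\rho_0$ above, every gap of $E_{n+1}$ contained in an interval of length $\le\delta$ is of this good type, so Lemmas~\ref{L1},~\ref{L2},~\ref{detachment2} (applied with $g=f_{n+1}$) and~\ref{L5} go through verbatim, and each detachment step over an interval $J$ removes from $f_{n+1}(J)\setminus f_{n+1}(C_{f_{n+1},\delta}\cap J)$ a disjoint family of intervals of total length $\ge K\,\mu_{n+1}(J)$ for a fixed $K>0$.

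\textbf{Step 3: the recursion.} Starting from $I$, run the recursion from the proof of Proposition~\ref{casen=1} — detach along good gaps, delete the corresponding preimage intervals (on which $C_{f_{n+1},\delta}$ is empty) and pass to the complementary subintervals — and at each stage dispose, via Step 1, of the portions of the current subintervals on which $\mu_{n+1}$ is sparse to the left, continuing to detach on the rest. Each detachment step removes a fixed proportion of the available $\mu_{n+1}$-mass, so the $\mu_{n+1}$-content of the part that survives every stage tends to $0$; this, together with the vanishing (as $\varepsilon\to0$) of the error terms produced by Step 1, should yield $m(f_{n+1}(C_{f_{n+1},\delta}\cap I))=0$. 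Finally $C_{f_{n+1}}\subset C_{f_{n+1},1}$ gives $m(f_{n+1}(C_{f_{n+1}}))=0$, and Proposition~\ref{justluzinproperty} yields Theorem~\ref{maintheoremmultiple} for $f=f_{n+1}$.

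\textbf{Main obstacle.} The hard part is Step 3: one must interleave the geometric detachment (which erodes only the $\mu_{n+1}$-part of the image) with the Step 1 reduction (which, through the inductive hypothesis, controls the $\nu$-part), organised so that the recursion provably exhausts $I$ up to an $f_{n+1}$-null set while all constants stay uniform in the recursion depth. Making this bookkeeping precise — in particular ensuring that the $\nu$-mass of the surviving set, which sits near $E_{n+1}$ yet is $\mu_{n+1}$-null, is genuinely killed — is exactly where the interaction between $\mu_{n+1}$ and $\mu_1+\cdots+\mu_n$, and the ordering $d_{n+1}<d_n<\cdots<d_1$, must be exploited carefully.
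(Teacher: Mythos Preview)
Your Steps~1 and~2 are essentially correct computations, and Step~2 in particular lines up with the paper's own use of Lemma~\ref{detachment2} (with $\mu=\mu_{n+1}$, $\eta=\mu_1+\cdots+\mu_n$) after choosing $\delta$ small enough that gaps of $E_{n+1}$ inside $J$ satisfy the domination hypothesis. The identity $m(f_n(B))=\nu(B)$ that you use in Step~1 is indeed valid for atomless $\nu$, though it deserves a line of justification.

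The genuine gap is Step~3, as you yourself flag. Your detachment step removes $\ge K\mu_{n+1}(J)$ from the $f_{n+1}$-image, so recursion drives the surviving $\mu_{n+1}$-mass to zero; but the surviving $\nu$-mass is untouched. You propose to absorb it via Step~1, yet Step~1 only controls $C_{f_{n+1},\delta}\cap\{\mu_{n+1}((\cdot-\delta,\cdot])<\varepsilon\}$, and a $\mu_{n+1}$-null residual set need not lie in this pointwise-defined set (a point $x$ can have $\mu_{n+1}((x-\delta,x])$ large even though $x$ sits in a $\mu_{n+1}$-null piece of the recursion). Moreover $H_\varepsilon$ is not an interval, so ``continuing to detach on the rest'' does not directly make sense with the interval-based Lemmas~\ref{L1}--\ref{L5}. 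You have the right ingredients but no mechanism that makes the two reductions mesh.

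The paper resolves this with a cleaner use of the inductive hypothesis than your Step~1. The key observation (their remark~(C)) is that on any interval $J$ with $J\cap E_{n+1}=\emptyset$ one has $f_{n+1}=f_n+\text{const}$ locally, so $m(f_{n+1}(C_{f_{n+1},\delta}\cap J))=0$ follows \emph{exactly} from the hypothesis for $f_n$, not just approximately. This is combined with a structural partition (their Claim~1): for any $J\subset I$ and $\epsilon>0$, split $J$ into equal pieces and keep only those meeting $E_{n+1}$; since $d_{n+1}<d_j$ for $j\le n$, the kept pieces carry all of $\mu_{n+1}(J)$ but total $\eta$-mass $\le\epsilon$, while the discarded pieces are disjoint from $E_{n+1}$ and are killed outright by remark~(C). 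After this partition, one applies the detachment (their Claim~2, your Step~2) on each kept piece. One full round therefore yields the clean bound
\[
m\big(f_{n+1}(C_{f_{n+1},\delta}\cap I)\big)\le (1-K)\,\mu_{n+1}(I)+\epsilon,
\]
and the recursion (alternating Claim~1 with shrinking $\epsilon$'s and Claim~2) closes immediately. Replacing your Step~1 by this exact reduction on intervals off $E_{n+1}$, together with the finite partition of Claim~1, is what turns your sketch into a proof.
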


Before the proof, several remarks are in order.

\begin{enumerate}[label=(\alph*)]
	\item \label{A} If $\mu, \mu'$ are $d$-Ahlfors measures associated with closed sets $E, E' \subset [0,1]$, then $\mu + \mu'$ defines a $d$-Ahlfors measure associated with a closed set $E \cup E' \subset [0,1]$. Because of this to prove Theorem~\ref{maintheoremmultiple} it suffices to show Proposition~\ref{induction} where all parameters $d_i$ are different.
	\item \label{B} The thesis is the stronger the smaller $\delta$ is. Indeed, if $0 < \delta' < \delta < \infty$, then $C_{f,\delta} \subset C_{f,\delta'}$. 
	\item \label{C} If the hypothesis of Proposition~\ref{induction} holds, then for any interval $I = (\alpha, \beta)$ such that $I \cap E_{n+1} = \emptyset$ one has $m(f_{n+1}(C_{f_{n+1},\delta} \cap I))=0$ for all $\delta \in (0,\infty)$. Indeed, for $\delta \in (0,\infty)$ choose any $L \in \N$ such that $L^{-1} < \delta$ and $2L^{-1} < m(I)$, and denote $I_L \coloneqq (\alpha + L^{-1}, \beta - L^{-1})$. Then
	\[
	f_{n+1}(C_{f_{n+1},\delta} \cap I_L) \subset f_{n+1}(C_{f_{n+1},L^{-1}} \cap I_L) = f_{n}(C_{f_{n},L^{-1}} \cap I_L) + c_{I}
	\subset f_{n}(C_{f_{n},L^{-1}}) + c_{I}
	\]
	for some constant $c_I$, so that all these sets are $m$-null sets. Letting $L \to \infty$, we obtain the thesis. This observation tells us that it suffices to analyze the image of the set $C_{f_{n+1}, \delta} \cap E_{n+1}$. Also, we see that dealing with local operators $M_\delta$ in Proposition~\ref{induction} simplifies the analysis outside $E_{n+1}$.       
	\item \label{D} Since $d_{n+1}$ is strictly smaller than $d_1, \dots, d_n$, there exists $\delta_0$ such that, for $x \in E_{n+1}$ and $r \leq \delta_0$,
	\[
	\mu_{n+1}([x-r,x+r]) \ge 4 (\mu_1 + \dots + \mu_n)([x-2r,x+2r]).
	\]
	In other words, for $x \in E_{n+1}$ and small radii the impact of the measures $\mu_1, \dots, \mu_{n}$ is negligible. This is a consequence of the Ahlfors regularity of the measures. 
	This is also the second reason why looking at local operators $M_\delta$ simplifies the analysis. Notice that by \ref{B} it suffices to show that the thesis holds for all $\delta \in (0, \delta_0]$.      
\end{enumerate}

\begin{proof}[Proof of Proposition~\ref{induction}]
	Let $\delta \in (0, \delta_0]$. We take an interval $I = (\alpha, \beta)$ such that $m(I) \leq \delta$, and show $m(f_{n+1}(C_{f_{n+1},\delta} \cap I)) = 0$. In what follows, we put $\mu = \mu_{n+1}$ and $\eta = \mu_1 + \dots + \mu_n$. Also, we adopt the rest of notation from Section~\ref{S4} with $\mu_{n+1}$ playing the role of $\mu$ (that is, $a_i, b_i, \Lambda$ refer to $\mu_{n+1}$).
	
	We first show that the thesis holds if the following claims are true.
	
	\smallskip \noindent {\bf Claim~1.} For each interval $J \subset I$ and each $\epsilon \in (0,\infty)$ it is possible to find finitely many disjoint open intervals $J_1, \dots, J_N \subset J$ such that $\mu(J) = \mu(J_1 \cup \cdots \cup J_N)$ and $\eta(J_1 \cup \cdots \cup J_N) \leq \epsilon$.
	
	\smallskip \noindent {\bf Claim~2.} For $K \coloneqq \frac{1}{64}C_{n+1}^{-4}12^{-d_{n+1}} \in (0,1)$ and each interval $J \subset I$ it is possible to find finitely many disjoint open intervals $J^1, \dots, J^N \subset J$ such that $\mu(J^1 \cup \cdots \cup J^N) \geq K \mu(J)$ and $f_{n+1}(J^1 \cup \cdots \cup J^N) \subset f_{n+1}(J) \setminus f_{n+1}(C_{f_{n+1}, \delta} \cap J)$.
	
	\smallskip Indeed, take $\epsilon_1 \in (0,\infty)$ and let $I_1 \cup \cdots \cup I_N \cup \tilde I_1 \cup \cdots \cup \tilde I_{\tilde N}$ be a disjoint sum covering $I$ up to a finite set of endpoints, with $I_1, \dots, I_N$ being $J_1, \dots, J_N$ from {\bf Claim~1} applied with $J = I$ and $\epsilon = \epsilon_1$. Since $\mu(\tilde I_i) = 0$ and $\tilde I_i$ is open, we have $\tilde I_i \cap E_{n+1} = \emptyset$ so that $m(f_{n+1}(C_{f_{n+1},\delta} \cap \tilde I_i)) = 0$ holds by the inductive hypothesis, see \ref{C}. Then, for each $i \in \{1, \dots, N\}$ let $I_i^{1} \cup \cdots \cup I_i^{N_i} \cup \tilde I_i^1 \cup \cdots \cup \tilde I_i^{\tilde N_i}$ be a disjoint sum covering $I_i$ up to a~finite set of endpoints, with $I_i^1, \dots, I_i^{N_i}$ being $J^1, \dots, J^N$ from {\bf Claim~2} applied with $J = I_i$. Then
	\[
	m(f_{n+1}(C_{f_{n+1}, \delta} \cap I)) = \sum_{i=1}^N m(f_{n+1}(C_{f_{n+1}, \delta} \cap I_i))
	= \sum_{i=1}^N \sum_{i' = 1}^{\tilde N_i} m(f_{n+1}(C_{f_{n+1}, \delta} \cap \tilde I_i^{i'}))
	\leq \sum_{i=1}^N \sum_{i' = 1}^{\tilde N_i} m(f_{n+1}(\tilde I_i^{i'}))
	\]  
	and the last sum is bounded by
	\[
	 \sum_{i=1}^N \sum_{i' = 1}^{\tilde N_i} (\mu + \eta)(\tilde I_i^{i'}) 
	 \leq
	 \sum_{i=1}^N \sum_{i' = 1}^{\tilde N_i} \mu(\tilde I_i^{i'})
	 + \sum_{i=1}^N \eta(I_i)
	 \leq \sum_{i=1}^N (1-K)\mu(I_i) + \epsilon_1 = (1-K)\mu(I) + \epsilon_1.
	\]
	However, instead of estimating $m(f_{n+1}(C_{f_{n+1}, \delta} \cap \tilde I_i^{i'}))$ by $m(f_{n+1}(\tilde I_i^{i'}))$, we may apply the same procedure as before with some $\epsilon_2 \in (0, \infty)$ and $\tilde I_i^{i'}$ in place of $I$. Then
	\[
	m(f_{n+1}(C_{f_{n+1}, \delta} \cap \tilde I_i^{i'})) 
	= \sum_{j=1}^{N_{i,i'}} \sum_{j' = 1}^{\tilde N_{i,i',j}}
	m(f_{n+1}(C_{f_{n+1}, \delta} \cap \tilde I_{i,j}^{i',j'}))
	\leq (1-K) \mu(\tilde I_i^{i'})) + \epsilon_2 
	\]
	for certain appropriately chosen intervals $\tilde I_{i,j}^{i',j'} \subset I_i^{i'}$,
	and summing this over $i,i'$ gives
	\[
	m(f_{n+1}(C_{f_{n+1}, \delta} \cap I)) \leq
	\sum_{i=1}^N \sum_{i' = 1}^{\tilde N_i} \big( (1-K) \mu(\tilde I_i^{i'}) + \epsilon_2 \big) \leq (1-K)^2 \mu(I) + \tfrac{1}{2} \epsilon_1, 
	\]
	provided that $(\tilde N_1 + \dots + \tilde N_N) \epsilon_2 \leq \frac{1}{2} \epsilon_1$. We then deduce that $m(f_{n+1}(C_{f_{n+1}, \delta} \cap I)) \leq (1-K)^L \mu(I) + \frac{1}{L} \epsilon_1$ for each $L \in \N$, repeating this procedure recursively. Thus, $m(f_{n+1}(C_{f_{n+1}, \delta} \cap I)) = 0$ follows. 
	
	\smallskip \noindent {\bf Proof of Claim~1.} Take $L \in \N$ and split $J$ into $L$ intervals $J'$ of equal lengths $m(J') = L^{-1} m(J)$. Let $J_1, \dots, J_N$ be those of them for which $\mu(J') > 0$ holds. We shall show that $\eta(J_1 \cup \cdots \cup J_N) \leq \epsilon$ is satisfied for some large $L$. 
	To this end, note that for a given parameter $\kappa \in (0, \infty)$ one has $\eta(J_i) \leq \kappa \mu(3J_i)$ for each $i \in \{1, \dots, N\}$, provided that $L$ is large enough (here $3J_i$ stands for the open interval concentric with $J_i$ but $3$ times longer). Indeed, this follows, since $J_i \cap E_{n+1} \neq \emptyset$ and $d_{n+1}$ is the smallest Ahlfors parameter. Consequently, $\eta(J_1 + \dots + J_{N}) \leq 3 \kappa \mu(3J) \leq \epsilon$ if $\kappa$ is small enough, cf.~\ref{D}.      
	
	\smallskip \noindent {\bf Proof of Claim~2.}
%	First, observe that $m(f_{n+1}(C_{f_{n+1},\delta} \cap I)) \leq m(f(I)) = (\mu+\eta)(I)$. 
	By Lemma~\ref{L2} we can find a finite set $\Lambda^\circ \subset \Lambda_J$ such that $(b_i,b_i+\tilde{r_i})$, $i \in \Lambda^\circ$, are disjoint, and $\sum_{i \in \Lambda^\circ} \mu((b_i,b_i+\tilde r_i)) \ge 16K \mu(J)$.
	Since $m(J) \leq m(I) \leq \delta \leq \delta_0$, we have
	\[
	\mu([b_i, b_i+r_i]) = \mu([b_i-r_i,b_i+r_i]) \ge 4 \eta ([b_i-2r_i,b_i+2r_i]) \geq 4 \eta ([b_i-2r_i,b_i])
	\]
	for each $i \in \Lambda_J$. Thus, by Lemmas~\ref{detachment2}~and~\ref{L5} we obtain
	\[
	\big( f_{n+1}(b_i), f_{n+1}(b_i)+\tfrac{1}{8}\mu((b_i,b_i+\tilde r_i)) \big) \subset f_{n+1}(J) \setminus f_{n+1}(C_{f_{n+1},\delta} \cap J).
	\]
	Let $\tilde r_i' \in (0, \tilde r_i)$ be such that
	$(\mu + \eta)((b_i, b_i + \tilde r_i')) = \frac{1}{8} \mu((b_i, b_i + \tilde r_i))$. Then, for $J^i = (b_i, b_i + \tilde r_i')$ we have
	$f_{n+1}(J^i) \subset f_{n+1}(J) \setminus f_{n+1}(C_{f_{n+1}, \delta} \cap J)$, as desired. Moreover, $\mu(J^i) \geq \frac{1}{2} (\mu + \eta)(J^i) = \frac{1}{16} \mu((b_i,b_i + \tilde r_i))$ by \ref{D}, since $\tilde r_i' \leq \delta_0$.  
	Consequently,
	$
	\sum_{i \in \Lambda^\circ} \mu(J^i) \ge \sum_{i \in \Lambda^\circ} \frac{1}{16} \mu((b_i,b_i+\tilde r_i)) \ge K \mu(J)
	$
	and we are done.
	
	\smallskip \noindent The proof of Proposition~\ref{induction} (and by \ref{A} and Proposition~\ref{justluzinproperty} also the proof of Theorem~\ref{maintheoremmultiple}) is complete.
\end{proof}

We conclude the article noting that repeating the arguments from Propositions~\ref{casen=1}~and~\ref{induction} one can also prove the following version of Theorem~\ref{maintheoremmultiple}.

\begin{corollary}
	For a given interval $I = (\alpha, \beta) \subset \R$ (we allow $\alpha=-\infty$ or $\beta = \infty$) define the associated operator $M_I$ as follows: for each $x \in I$, we let
	\[
	M_I g(x) \coloneqq \underset{r \in (0, \infty) : (x-r, x+r) \subset I}{\sup}
	\, \intav{B(x,r)} |g|.
	%\frac{\int_{x-r}^{x+r}|g|}{2r}.
	\]
	Denote also $C_{f, I} \coloneqq \{ x \in I; M_I f(x) = f(x) \}$ for $f$ from Theorem~\ref{maintheoremmultiple}. Then $m(f(C_{f,I})) = 0$ and hence $M_I f$ is absolutely continuous on $I$ ($M_I f$ is continuous and of bounded variation by the results stated in \cite{Kurka2010}).
\end{corollary}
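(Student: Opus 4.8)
The plan is to deduce the corollary from the fact---already established in the course of proving Theorem~\ref{maintheoremmultiple}---that $m\big(f(C_{f,\delta})\big) = 0$ for every $\delta \in (0,\infty)$ (this is exactly what Proposition~\ref{casen=1} and the repeated application of Proposition~\ref{induction} give, before the final appeal to Proposition~\ref{justluzinproperty}), so that no geometric construction needs to be redone. First I would record that each $\mu_i$ is a finite, nonatomic Borel measure, hence $f$ is bounded, nondecreasing and continuous on $\R$; in particular $f \in BV(\R) \cap C(\R)$ and $f \ge 0$. The proof of Proposition~\ref{justluzinproperty} then applies verbatim with $M_I$ in place of $M$ and $I$ in place of $\R$, using the $M_I$-analogues of Lemmas~\ref{continuous} and~\ref{locallyl} (that is, $M_I f$ is continuous and of bounded variation on $I$, the set $\{x \in I; M_I f(x) > f(x)\}$ is open, and $M_I f$ is locally Lipschitz there), all of which follow from the arguments of \cite{Kurka2010} invoked in the statement. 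This reduces the corollary to proving $m\big(f(C_{f,I})\big) = 0$.

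Writing $I = (\alpha, \beta)$ and, for $\delta \in (0,\infty)$, setting $I_\delta \coloneqq I \cap [\alpha+\delta, \beta-\delta]$ (with the obvious meaning when $\alpha = -\infty$ or $\beta = \infty$), the key step I would carry out is the localization
\[
C_{f,I} \cap I_\delta \subset C_{f,\delta}.
\]
Indeed, for $x \in I_\delta$ and any $r \in (0,\delta]$ one has $(x-r, x+r) \subset (\alpha, \beta) = I$, so $r$ is an admissible radius for $M_I$ at $x$; taking the supremum over such $r$ gives $M_\delta f(x) \le M_I f(x)$. Since also $M_\delta f(x) \ge \lim_{r\to 0^+} \intav{B(x,r)}|f| = f(x)$ by continuity of $f$ and $f \ge 0$, the equality $M_I f(x) = f(x)$ forces $M_\delta f(x) = f(x)$, that is, $x \in C_{f,\delta}$. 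Therefore $m\big(f(C_{f,I} \cap I_\delta)\big) \le m\big(f(C_{f,\delta})\big) = 0$.

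To conclude I would let $\delta \to 0^+$. As $C_{f,I} \setminus I_\delta \subset (\alpha, \alpha+\delta) \cup (\beta-\delta, \beta)$ and $f$ is nondecreasing and continuous,
\[
m\big(f(C_{f,I})\big) \le m\big(f(C_{f,I} \cap I_\delta)\big) + \big(f(\alpha+\delta) - f(\alpha)\big) + \big(f(\beta) - f(\beta-\delta)\big),
\]
where the first term vanishes and the other two tend to $0$ as $\delta \to 0^+$ (each being read as $0$ whenever the corresponding endpoint is infinite). Hence $m\big(f(C_{f,I})\big) = 0$, and the corollary follows as above.

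I expect the only genuinely delicate point to be the behaviour of $M_I f$ near $\partial I$, where $M_I$ averages only over small balls; this is precisely where a direct adaptation of Lemma~\ref{detachment2} would break, since for a complementary interval $(a_i, b_i)$ abutting the right endpoint of $I$ the interval $[b_i - 2r_i, b_i + 2r_i]$ need not lie in $I$ and hence need not be admissible for $M_I$. The approach above avoids this by using nothing more than the continuity of $f$ near $\partial I$; alternatively, one could rerun the proofs of Propositions~\ref{casen=1} and~\ref{induction} with $M_\delta$ replaced by $M_I$, restricting at the first stage of each recursion to the indices $i$ with $[b_i-2r_i, b_i+2r_i] \subset I$ and absorbing the small mass near $\partial I$ into the error term.
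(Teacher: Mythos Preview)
Your proof is correct and takes a genuinely different route from the paper. The paper's ``proof'' consists solely of the remark that one can repeat the arguments of Propositions~\ref{casen=1} and~\ref{induction} with $M_I$ in place of $M_\delta$; carrying this out would mean rerunning the recursive geometric construction while checking at each stage that the test balls $[b_i-2r_i,b_i+2r_i]$ used in Lemma~\ref{detachment2} remain inside $I$. You instead observe the localization $C_{f,I}\cap I_\delta\subset C_{f,\delta}$, which reduces the corollary directly to the statement $m\big(f(C_{f,\delta})\big)=0$ already obtained en route to Theorem~\ref{maintheoremmultiple}, and you dispose of the boundary strips $(\alpha,\alpha+\delta)\cup(\beta-\delta,\beta)$ simply by continuity of $f$. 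Your argument is shorter and more transparent: it isolates ``$m\big(f(C_{f,\delta})\big)=0$ for every $\delta>0$'' as the portable output of Sections~\ref{S4}--\ref{S5} and derives the $M_I$ result formally, whereas the paper's suggested approach would have to address exactly the boundary issue you flag in your final paragraph at every level of the recursion.
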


\bibliography{Reference}
\bibliographystyle{amsplain} 
\end{document}